\newtheorem{thm}{Theorem}[section]
\newtheorem{prop}[thm]{Proposition}
\newtheorem{lem}[thm]{Lemma}
\theoremstyle{remark}
\newtheorem{rem}[thm]{Remark}
\newcommand{\FF}{\mathbb{F}}
\newcommand{\ZZ}{\mathbb{Z}}
\newcommand{\0}{\mathbf{0}}
\newcommand{\1}{\mathbf{1}}
\newcommand{\ww}{\omega}
\newcommand{\vv}{\omega^2}
\newcommand{\cC}{\mathcal{C}}
\newcommand{\cD}{\mathcal{D}}
\newcommand{\cP}{\mathcal{P}}
\DeclareMathOperator{\wt}{wt}
\DeclareMathOperator{\rank}{rank}
\begin{document}
\title{Quaternary  Hermitian linear complementary dual codes
}

\author{
Makoto Araya\thanks{Department of Computer Science,
Shizuoka University,
Hamamatsu 432--8011, Japan.
email: {\tt araya@inf.shizuoka.ac.jp}},
Masaaki Harada\thanks{
Research Center for Pure and Applied Mathematics,
Graduate School of Information Sciences,
Tohoku University, Sendai 980--8579, Japan.
email: {\tt mharada@tohoku.ac.jp}.}
and 
Ken Saito\thanks{
Research Center for Pure and Applied Mathematics,
Graduate School of Information Sciences,
Tohoku University, Sendai 980--8579, Japan.
email: {\tt kensaito@ims.is.tohoku.ac.jp}.}
}


\maketitle

\begin{abstract}
The largest minimum weights among 
quaternary Hermitian linear complementary dual codes
are known for dimension $2$.
In this paper, we give some conditions on the nonexistence of 
quaternary Hermitian linear complementary dual
codes with large minimum weights.
As a consequence, we completely determine the largest minimum
weights for dimension $3$, by using 
a classification of some quaternary codes.
In addition, for a positive integer $s$,
an entanglement-assisted quantum error-correcting
$[[21s+5,3,16s+3;21s+2]]$ code with maximal entanglement 
is constructed  for the first time
from a quaternary Hermitian linear complementary dual
$[26,3,19]$ code.
\end{abstract}

\section{Introduction}

Linear complementary dual (LCD for short) 
codes are linear codes that intersect with their dual
trivially.
LCD codes were introduced by Massey~\cite{Massey} and 
gave an optimum linear
coding solution for the two user binary adder channel.
Recently, much work has been done concerning LCD codes
for both theoretical and practical reasons
(see e.g.~\cite{AH}, \cite{CMTQ}, 
\cite{CMTQ2}, \cite{GOS}, \cite{LLG}, \cite{LLGF}
and the references given therein).
For example,
if there is a quaternary Hermitian LCD $[n,k,d]$ code,
then there is a maximal entanglement $[[n,k,d;n-k]]$ 
entanglement-assisted quantum error-correcting code 
(EAQECC for short) (see e.g.~\cite{LLG} and \cite{LLGF}).
From this point of view,
quaternary Hermitian LCD codes
play an important role in the study of
maximal entanglement EAQECC's.

It is a fundamental problem to determine the largest minimum
weight $d_4(n,k)$ among all quaternary Hermitian LCD 
$[n,k]$ codes for a given pair $(n,k)$.
It was shown that
$d_4(n,2)=\lfloor \frac{4n}{5} \rfloor$
if  $n \equiv 1,2,3 \pmod 5$ and
$d_4(n,2)=\lfloor \frac{4n}{5} \rfloor-1$
if  $n \equiv 0,4 \pmod 5$
for $n \ge 3$ in~\cite{Li} and \cite{LLGF}.
In this paper, we give some conditions on the nonexistence of 
quaternary Hermitian LCD codes with large minimum weights.
We give a classification of (unrestricted)
quaternary $[4r,3,3r]$ codes for $r=9,10,12,13,14,16$
and quaternary $[43,3,32]$ codes.
Using the above classification and the classification in~\cite{BGV}, 
we completely determine the largest minimum
weight among all quaternary Hermitian LCD 
codes of dimension $3$.
In addition, for a positive integer $s$,
a maximal entanglement $[[21s+5,3,16s+3;21s+2]]$ EAQECC
is constructed for the first time
from a quaternary Hermitian LCD $[26,3,19]$ code.

This paper is organized as follows.
In Section~\ref{Sec:2}, we prepare some definitions, notations
and basic results used in this paper. 
In Section~\ref{Sec:main}, we give characterizations of 
quaternary Hermitian LCD codes.
It is shown that there is no quaternary Hermitian LCD 
$[\frac{4^k-1}{3}s,k,4^{k-1}s]$ code
for $k \ge 3$ and $s \ge 1$ (Theorem~\ref{thm:S1}).
In addition, 
if $4 (4^{k-1}n-\frac{4^k-1}{3}\alpha) < k$, where $k \ge 3$ and $4\alpha-3n \ge 1$, then 
there is no quaternary Hermitian 
LCD $[n,k,\alpha]$ code $C$ with $d(C^{\perp_H}) \ge 2$, where
$d(C)$ denotes the minimum (Hamming)
weight of a quaternary code $C$ and $C^{\perp_H}$ denotes
the Hermitian dual code of $C$.
If $4(4^{k-1}n-\frac{4^k-1}{3}\alpha) \ge k \ge 3$, where $4\alpha-3n \ge 1$ and 
there is no quaternary Hermitian LCD 
$[4 (4^{k-1}n-\frac{4^k-1}{3}\alpha),k,
3(4^{k-1}n-\frac{4^k-1}{3}\alpha)]$ code $C_0$ with 
$d(C_0^{\perp_H}) \ge 2$, then 
there is no quaternary Hermitian LCD $[n,k,\alpha]$ code $C$ 
with $d(C^{\perp_H}) \ge 2$ (Theorem~\ref{thm:S-2-9}).
In Section~\ref{Sec:C}, from the
classification of quaternary codes of dimension $3$
by Bouyukliev, Grassl and Varbanov~\cite{BGV}, we 
determine $d_4(n,3)$ for $n \le 35$.
We emphasize that there is a quaternary Hermitian LCD 
$[26,3,19]$ code.  
This implies the existence of a quaternary Hermitian LCD 
$[21s+5,3,16s+3]$ code for $s \ge 1$
(Proposition~\ref{prop:1}).  
We also give a classification of 
quaternary $[4r,3,3r]$ codes for $r=9,10,12,13,14,16$
and quaternary $[43,3,32]$ codes.
In Section~\ref{Sec:dim3}, we completely determine $d_4(n,3)$ 
(Theorem~\ref{thm:main}).
This result is mainly obtained by applying
Theorems~\ref{thm:S1} and \ref{thm:S-2-9} to
the classification of some quaternary codes of dimension $3$
given in Section~\ref{Sec:C}.
As a consequence of Proposition~\ref{prop:1}, 
we show that there is
a maximal entanglement $[[21s+5,3,16s+3;21s+2]]$ EAQECC
for $s \ge 1$.  
This determines the largest minimum weight among
maximal entanglement $[[21s+5,3,d;21s+2]]$ EAQECC's as $16s+3$
(Remark~\ref{rem:Q}).
Finally, in Appendix, we give a proof of Proposition~\ref{prop:dim2-1}.

\section{Preliminaries}\label{Sec:2}

In this section, we prepare some definitions, notations
and basic results used in this paper.

\subsection{Definitions and notations}

We denote the finite field of order $4$
by $\FF_4=\{ 0,1,\ww , \vv  \}$, where $\omega^2 = \omega +1$.
For any element $\alpha \in \FF_{4}$, the conjugation of $\alpha$ is
defined as $\overline{\alpha}=\alpha^2$.
Throughout this paper, we use the following notations.
Let $\0_{s}$ and $\1_{s}$ denote the zero vector and the all-one vector of 
length $s$, respectively.
Let $O$ denote the zero matrix of appropriate size.
Let $I_k$ denote the identity matrix of order $k$ and
let $A^T$ denote the transpose of a matrix $A$.
For a matrix $A=(a_{ij})$, 
the conjugate matrix of $A$ is defined as
$\overline{A}=(\overline{a_{ij}})$.
For a $k \times n$ matrix $A$, we denote by $A^{(s)}$ 
the $k \times ns$ matrix
$
\left(
\begin{array}{cccccccc}
A & \cdots & A
\end{array}
\right).
$

A {\em quaternary} $[n,k]$ {\em code} $C$
is a $k$-dimensional vector subspace of $\FF_4^n$.
A generator matrix of a quaternary $[n,k]$ code $C$ is a $k \times n$
matrix such that the rows of the matrix generate $C$.
The {\em weight} $\wt(x)$ of a vector $x \in \FF_4^n$ is
the number of non-zero components of $x$.
A vector of $C$ is called a {\em codeword} of $C$.
The minimum non-zero weight of all codewords in $C$ is called
the {\em minimum weight} $d(C)$ of $C$. A quaternary $[n,k,d]$ code
is a quaternary $[n,k]$ code with minimum weight $d$.
The {\em weight enumerator} of a quaternary $[n,k]$ code $C$
is the polynomial $\sum_{i=0}^{n} A_i y^i$,
where $A_i$ denotes the number of codewords of weight $i$ in $C$.
Two quaternary $[n,k]$ codes $C$ and $C'$ are
{\em equivalent} 
if there is an $n \times n$ monomial matrix $P$ over $\FF_4$ with
$C' = \{ x P \mid x \in C\}$.

For any (unrestricted) quaternary $[n,k,d]$ code, 
the Griesmer bound is given by
\begin{equation}\label{eq:Gb}
n \ge \sum_{i=0}^{k-1} \left\lceil \frac{d}{4^i}\right\rceil.
\end{equation}
Throughout this paper, we use the following notation:
\begin{equation}\label{eq:Gb2}
\alpha_4(n,k)=\max\left\{d \in \mathbb{Z}_{\ge 0} \mid 
n \ge \sum_{i=0}^{k-1} \left\lceil \frac{d}{4^i}\right\rceil\right\},
\end{equation}
where $\ZZ_{\ge 0}$ denotes the set of nonnegative integers.

The {\em Euclidean dual} code $C^{\perp}$ of a quaternary $[n,k]$ code 
$C$ is defined as
$
C^{\perp}=
\{x \in \FF_4^n \mid \langle x,y\rangle = 0 \text{ for all } y \in C\},
$
where $\langle x,y\rangle = \sum_{i=1}^{n} x_i {y_i}$
for $x=(x_1,\ldots,x_n), y=(y_1,\ldots,y_n) \in \FF_4^n$.
The {\em Hermitian dual} code $C^{\perp_H}$ of a quaternary 
$[n,k]$ code $C$ is defined as
$
C^{\perp_H}=
\{x \in \FF_{4}^n \mid \langle x,y\rangle_H = 0 \text{ for all } y \in
C\
\},
$
where $\langle x,y\rangle_H= \sum_{i=1}^{n} x_i \overline{y_i}$
for $x=(x_1,\ldots,x_n), y=(y_1,\ldots,y_n) \in \FF_{4}^n$.
A quaternary code $C$ is called
{\em Euclidean linear complementary dual}
if $C \cap C^\perp = \{\0_n\}$.
A quaternary code $C$ is called
{\em Hermitian linear complementary dual}
if $C \cap C^{\perp_H} = \{\0_n\}$.
These two families of quaternary codes are collectively called
{\em linear complementary dual} (LCD for short) codes.
Note that quaternary Hermitian LCD
codes are also called {\em zero radical} codes
(see e.g.~\cite{LLG} and \cite{LLGF}).

A quaternary code $C$ is called
{\em Hermitian self-orthogonal}
if  $C \subset C^{\perp_H}$.
A quaternary code $C$ is called {\em even} if the weights
of all codewords of $C$ are even.
It is known that a quaternary code $C$ is 
Hermitian self-orthogonal
if and only if $C$ is even~\cite[Theorem~1]{MOSW}.
In addition, 
a quaternary code $C$ is Hermitian self-orthogonal
if and only if $G \overline{G}^T =O$
for a generator matrix $G$ of $C$.

A {\em $2$-$(v,k,\lambda)$ design} $\cD$ is a pair of a set $\cP$ of $v$ points
and a collection of $k$-element subsets of 
$\cP$ (called blocks)
such that every $2$-element subset of $\cP$ is contained in exactly
$\lambda$ blocks.
The number of blocks that contain a given
point is traditionally denoted by $r$, and the total number of
blocks is $b$.
Often a $2$-$(v,k,\lambda)$ design is simply called a $2$-design.
A $2$-design is called {\em symmetric} if $v=b$.
A $2$-design can be represented by its
{\em incidence matrix} $A=(a_{ij})$, where $a_{ij}=1$ if the $j$-th
point is contained in the $i$-th block
and $a_{ij}=0$ otherwise.

\subsection{Quaternary Hermitian LCD codes}

The following characterization gives a criterion for 
quaternary Hermitian LCD codes and is analogous to~\cite[Proposition 1]{Massey}.

\begin{prop}[{\cite[Proposition~3.5]{GOS}}]
Let $C$ be a quaternary code.  
Let $G$ be a generator matrix of $C$.
Then $C$ is Hermitian LCD if and only if
$G \overline{G}^T$ is nonsingular.
\end{prop}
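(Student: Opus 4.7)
The plan is to translate the intersection condition $C \cap C^{\perp_H} = \{\mathbf{0}_n\}$ into a rank condition on $G\overline{G}^T$ by working in coordinates with respect to the rows of $G$. Since $G$ has linearly independent rows, every codeword of $C$ is uniquely of the form $aG$ for some row vector $a \in \FF_4^k$, and the map $a \mapsto aG$ from $\FF_4^k$ onto $C$ is a bijection.

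The first step is to compute the Hermitian inner product of two codewords in terms of $a$, $b$, and $G$. Writing codewords as $aG$ and $bG$, a direct calculation gives
\[
\langle aG, bG \rangle_H = (aG)\,\overline{(bG)}^T = a\,G\overline{G}^T\,\overline{b}^T.
\]
The second step is to characterize when $aG \in C^{\perp_H}$. By the identity above, $aG \in C^{\perp_H}$ iff $a G\overline{G}^T\,\overline{b}^T = 0$ for every $b \in \FF_4^k$. Since the map $b \mapsto \overline{b}^T$ is a bijection on the space of column vectors over $\FF_4$, this is equivalent to $a G\overline{G}^T = \mathbf{0}_k$. Thus the codeword $aG$ lies in $C \cap C^{\perp_H}$ precisely when $a$ lies in the left kernel of $G\overline{G}^T$.

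The third step is simply to put these two observations together. Because $a \mapsto aG$ is injective, $C \cap C^{\perp_H} = \{\mathbf{0}_n\}$ holds iff the left kernel of the $k \times k$ matrix $G\overline{G}^T$ is trivial, which happens iff $G\overline{G}^T$ is nonsingular. This gives both implications at once. There is no genuine obstacle in the argument; it is a routine linear algebra computation, and the only point requiring attention is the bookkeeping around conjugation, in particular using that $b \mapsto \overline{b}$ is a bijection of $\FF_4^k$ to pass from the vanishing of $a G\overline{G}^T \overline{b}^T$ for all $b$ to the vanishing of $a G\overline{G}^T$ itself.
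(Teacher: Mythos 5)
Your proof is correct and is exactly the standard argument (the Hermitian analogue of Massey's original proof), which is what the cited reference [GOS, Proposition~3.5] does; the paper itself only quotes the result without proof. The key identity $\langle aG,bG\rangle_H=aG\overline{G}^T\overline{b}^T$, the reduction to the left kernel of $G\overline{G}^T$ via the bijectivity of conjugation, and the injectivity of $a\mapsto aG$ are all handled properly, so both implications follow as you state.
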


Throughout this paper, we use the above characterization 
without mentioning this.

\begin{lem}\label{lem:dd1}
Suppose that there is a quaternary Hermitian LCD $[n,k,d]$ code $C$.
If $d_4(n-1,k) \le d-1$, then $d(C^{\perp_H}) \ge 2$.
\end{lem}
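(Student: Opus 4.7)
The plan is to argue by contradiction using the characterization of Hermitian LCD codes via nonsingularity of $G\overline{G}^T$. Suppose $d(C^{\perp_H}) \le 1$; since $C^{\perp_H}$ cannot contain the zero vector as a nonzero codeword, this forces $d(C^{\perp_H}) = 1$, so there exists a vector $a e_i \in C^{\perp_H}$ with $a \in \FF_4 \setminus \{0\}$ for some coordinate $i$. Unwinding the Hermitian inner product, $\langle ae_i, c\rangle_H = a\,\overline{c_i} = 0$ for every $c \in C$, which forces $c_i = 0$ identically on $C$.

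Next I would puncture $C$ at this trivial coordinate. Let $G$ be a generator matrix of $C$; its $i$-th column is the zero column, and deleting it produces a $k \times (n-1)$ matrix $G'$ generating a quaternary $[n-1, k]$ code $C'$ of the same minimum weight $d$ as $C$, because no nonzero codeword could have had its support concentrated in the zero coordinate.

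The key verification is that $C'$ is still Hermitian LCD. This is immediate from Proposition 2.1 (the $G\overline{G}^T$ criterion): deleting a zero column does not change the matrix product, so $G'\overline{G'}^T = G\overline{G}^T$ remains nonsingular. Hence $C'$ is a quaternary Hermitian LCD $[n-1, k, d]$ code, giving $d_4(n-1, k) \ge d$, which directly contradicts the hypothesis $d_4(n-1,k) \le d-1$.

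I do not expect a genuine obstacle here; the only delicate point is the bookkeeping that dropping a column on which $C$ is identically zero preserves both the dimension (since that column contributes nothing to the row span) and the LCD property (via the observation on $G\overline{G}^T$), after which the contradiction with $d_4(n-1,k)$ is a one-line consequence.
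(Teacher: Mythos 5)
Your proof is correct and follows essentially the same route as the paper: identify the weight-one Hermitian dual vector, conclude the corresponding coordinate of $C$ is identically zero, delete that zero column of the generator matrix (which preserves $G\overline{G}^T$ and hence the LCD property, the dimension, and the minimum weight), and contradict $d_4(n-1,k)\le d-1$. The paper's proof is just a terser version of the same argument, so nothing further is needed.
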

\begin{proof}
Suppose that $d(C^{\perp_H})=1$.
Then some column of a generator matrix of $C$ is $\0_k^T$.
By deleting this column, a  quaternary 
Hermitian LCD $[n-1,k,d]$ code is constructed.
This contradicts the assumption that $d_4(n-1,k) \le d-1$.
\end{proof}

\begin{lem}\label{lem:S-2-4}
Let $G_1$ and $G_2$ be generator matrices of a quaternary Hermitian LCD $[n_1,k,d_1]$ code and a quaternary Hermitian self-orthogonal $[n_2,k,d_2]$ code, respectively.
Then the code with generator matrix 
$
\left(
\begin{array}{cccccccc}
G_1 & G_2
\end{array}
\right)
$
is a quaternary Hermitian LCD $[n_1+n_2,k,d']$ code with $d' \ge d_1+d_2$.
\end{lem}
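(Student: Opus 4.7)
The plan is to check the two defining properties of a Hermitian LCD code for the concatenated generator matrix $G = \left(\begin{array}{cc} G_1 & G_2 \end{array}\right)$, and then read off the minimum weight.

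First I would verify that $G$ generates a code of dimension $k$ and length $n_1+n_2$. This is immediate: $G_1$ already has rank $k$ (being a generator matrix of a $[n_1,k,d_1]$ code), so the $k\times(n_1+n_2)$ matrix $G$ has rank $k$ as well.

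Next, to show the concatenated code is Hermitian LCD, I would compute $G\overline{G}^T$ via block multiplication, obtaining
\[
G\overline{G}^T = G_1\overline{G_1}^T + G_2\overline{G_2}^T.
\]
Because the code generated by $G_2$ is Hermitian self-orthogonal, the characterization recalled just before this lemma gives $G_2\overline{G_2}^T = O$, so $G\overline{G}^T = G_1\overline{G_1}^T$. The right-hand side is nonsingular since the first code is Hermitian LCD, and therefore, by the preceding Proposition, so is the code generated by $G$.

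Finally, every nonzero codeword of the combined code has the form $(xG_1,\, xG_2)$ for some nonzero $x \in \FF_4^k$. Because both $G_1$ and $G_2$ have rank $k$, each of $xG_1$ and $xG_2$ is nonzero, hence has weight at least $d_1$ and $d_2$ respectively. Adding the two weights gives $d' \ge d_1 + d_2$. There is no real obstacle here; the whole argument rests on the block identity for $G\overline{G}^T$ together with the full row rank of $G_1$ and $G_2$.
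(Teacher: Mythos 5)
Your proof is correct, and the paper itself omits the argument as ``straightforward''; what you have written is exactly the intended verification (block computation giving $G\overline{G}^T = G_1\overline{G_1}^T + G_2\overline{G_2}^T = G_1\overline{G_1}^T$, plus the full-row-rank observation that both $xG_1$ and $xG_2$ are nonzero for nonzero $x$, so weights add). No issues.
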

\begin{proof}
The straightforward proof is omitted.
\end{proof}

\subsection{Determination of $d_4(n,2)$}

Suppose that there is an (unrestricted)  quaternary 
$[n,2,d]$ code.
By the Griesmer bound~\eqref{eq:Gb},
we have
\begin{equation}\label{eq:d}
d \le
\left\lfloor \frac{4n}{5}\right\rfloor.
\end{equation}
Lu, Li, Guo and Fu~\cite[Lemma 3.1]{LLGF} constructed
a quaternary Hermitian LCD $[n,2,\lfloor \frac{4n}{5} \rfloor]$ code
for $n \equiv 1,2,3 \pmod 5$ and $n \ge 3$, and 
a quaternary Hermitian LCD $[n,2,\lfloor \frac{4n}{5} \rfloor-1]$ code
for $n \equiv 0,4 \pmod 5$ and $n \ge 4$.
The following proposition is mentioned in~\cite{LLGF}, 
by quoting~\cite{Li}.

\begin{prop}[Li~\cite{Li}]\label{prop:dim2-1}
If $n \equiv 0,4 \pmod 5$, then there is no quaternary Hermitian 
LCD $[n,2,\lfloor \frac{4n}{5} \rfloor]$
code.
\end{prop}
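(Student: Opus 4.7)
The plan is to parameterize generator matrices of a quaternary $[n,2]$ code by the multiplicities of their columns in the five one-dimensional subspaces of $\FF_4^2$, show that Griesmer-optimality $d=\lfloor 4n/5\rfloor$ forces essentially a unique multiplicity vector in each of the residue classes $n\equiv 0,4\pmod 5$, and then check by a Cauchy--Binet parity argument that $\det(G\overline{G}^T)\equiv 0\pmod 2$ in both cases, so the code cannot be Hermitian LCD.

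First I would fix an enumeration $L_1,\ldots,L_5$ of the one-dimensional $\FF_4$-subspaces of $\FF_4^2$ and, for a generator matrix $G$ of $C$ with columns $v_1,\ldots,v_n$, let $m_j$ be the number of columns lying in $L_j\setminus\{\0_2\}$ and $z$ the number of zero columns, so $n=z+m_1+\cdots+m_5$. A short computation with the pairing $(v,(\alpha,\beta))\mapsto\alpha v_1+\beta v_2$ shows that the map sending a nonzero $(\alpha,\beta)\in\FF_4^2$ to the one-dimensional kernel $\{v:\alpha v_1+\beta v_2=0\}$ descends to a bijection of the five projective classes; consequently the nonzero weights of $C$ are exactly $n-z-m_j$ for $j=1,\ldots,5$, each occurring three times. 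Hence $d(C)=n-z-\max_j m_j$, and after reordering $m_1\ge\cdots\ge m_5$ we have $d=m_2+m_3+m_4+m_5$ with $m_j\le m_1\le n-d$.

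Second I would use these two inequalities to pin down the multiplicity vector. When $n=5s$ and $d=4s$, the constraints force $m_1=\cdots=m_5=s$ and $z=0$. When $n=5s+4$ and $d=4s+3$, they force, up to reordering, $(m_1,\ldots,m_5)=(s+1,s+1,s+1,s+1,s)$ and $z=0$. Thus the column distribution is essentially unique in each residue class.

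The final step is the parity computation for $\det(G\overline{G}^T)$. By the Cauchy--Binet formula, $\det(G\overline{G}^T)=\sum_{i<j}\det(v_i\mid v_j)\,\overline{\det(v_i\mid v_j)}$ where $(v_i\mid v_j)$ denotes the $2\times 2$ matrix with the indicated columns. Each summand equals $c\overline{c}=c^3\in\{0,1\}$ for $c\in\FF_4$, and is $1$ exactly when $v_i$ and $v_j$ are linearly independent, i.e.\ nonzero and in distinct $L_j$. Therefore $\det(G\overline{G}^T)\equiv\sum_{i<j}m_im_j\pmod 2$. Substituting the two multiplicity vectors above gives $\binom{5}{2}s^2=10s^2$ and $6(s+1)^2+4s(s+1)$ respectively, both even, so $G\overline{G}^T$ is singular and $C$ is not Hermitian LCD. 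I expect the Cauchy--Binet identification of $\det(G\overline{G}^T)\bmod 2$ with a count of rank-two column pairs to be the only nonobvious ingredient; once this and the weight-vs-multiplicity dictionary are in hand, everything reduces to a finite parity check.
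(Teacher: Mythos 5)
Your proof is correct, and while the first half mirrors the paper's strategy, the decisive computation is done by a genuinely different (and cleaner) argument. Like the paper, you parameterize a length-$n$ dimension-$2$ code by the multiplicities of its columns in the five projective points of $\FF_4^2$ and use the fact that the nonzero weights are $n-z-m_j$ (each thrice) to pin down the multiplicity vector; your version is slightly tidier in two respects: you rule out zero columns ($z=0$) directly from the counting inequalities instead of invoking Lemma~\ref{lem:dd1} together with the Griesmer bound, and by working with the total multiplicity of each projective class rather than a normal form $(I_2 \mid M(a))$ you collapse the paper's five subcases for $n=5s+4$ into the single multiset $(s+1,s+1,s+1,s+1,s)$. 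The real divergence is the singularity check: the paper writes out $\det\bigl(G(a)\overline{G(a)}^T\bigr)$ explicitly as a polynomial in $a_1,\ldots,a_5$ and evaluates it case by case, whereas you apply Cauchy--Binet to get $\det(G\overline{G}^T)=\sum_{i<j}c_{ij}\overline{c_{ij}}$ with $c_{ij}\overline{c_{ij}}=c_{ij}^3\in\{0,1\}$, so that the Gram determinant is just the parity of the number of linearly independent column pairs, $\sum_{j<k}m_jm_k \bmod 2$. (I checked that your formula agrees with the paper's explicit expression~\eqref{eq:det} once one uses $\omega+\omega^2=1$ and accounts for the $I_2$ columns.) Your identity is more conceptual, immediately generalizes the obstruction to any $2$-dimensional code, and reduces both residue classes to the evenness of $10s^2$ and $6(s+1)^2+4s(s+1)$; the paper's brute-force expansion buys nothing extra here except that it avoids quoting Cauchy--Binet.
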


\begin{rem}
In Appendix, we give a proof of the above proposition
for the sake of completeness.
\end{rem}

Hence, one can determine $d_4(n,2)$ as follows:
\[
d_4(n,2)=
\begin{cases}
\lfloor \frac{4n}{5} \rfloor & \text{ if } n \equiv 1,2,3 \pmod 5,\\
\lfloor \frac{4n}{5} \rfloor -1 & \text{ if } n \equiv 0,4 \pmod 5,
\end{cases}
\]
for $n \ge 3$.


\section{Nonexistence of some quaternary  Hermitian LCD codes}
\label{Sec:main}

In this section, we give results on the nonexistence 
of some quaternary Hermitian LCD codes.

An easy counting argument yields the following lemma.
We give a proof for the sake of completeness.
Recall that  $b$ and $r$ denote the number of blocks of 
a $2$-$(v,k,\lambda)$ design $\cD$ and the number 
of blocks containing a given point of $\cD$, respectively.

\begin{lem}\label{lem:2des}
Let $n$ and $\alpha$ be positive integers.
Let $m=(m_1,\ldots,m_{v}) $ be a vector of  $\mathbb{Z}_{\ge 0}^{v}$.
Suppose that there is a $2$-$(v,k,\lambda)$ design $\cD$.
Let $A$ be the incidence matrix of $\cD$.
If each entry of the $b \times 1$ $\ZZ$-matrix $A m^T$ is 
at least $\alpha$,
then
\[
\frac{r\alpha-\lambda \sum_{j=1}^v m_j}
{r-\lambda} \le m_i \le \sum_{j=1}^v m_j -\frac{b-r}{r-\lambda}\alpha,
\]
for any $i \in \{1,\ldots,v\}$.
\end{lem}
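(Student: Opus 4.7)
The plan is to prove both inequalities by standard double counting, exploiting the two key parameters of a $2$-design: every point lies in $r$ blocks, and every pair of distinct points lies in $\lambda$ blocks.

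Fix an index $i \in \{1,\ldots,v\}$. First I would compute the sum of the entries of $Am^T$ corresponding to blocks containing the $i$-th point. Indeed,
\[
\sum_{B \ni i}(Am^T)_B \;=\; \sum_{j=1}^v m_j \cdot \bigl|\{B : i,j \in B\}\bigr|
\;=\; r\, m_i + \lambda \sum_{j \ne i} m_j
\;=\; (r-\lambda)\, m_i + \lambda \sum_{j=1}^v m_j,
\]
since the $j=i$ term contributes $r$ and each $j \ne i$ contributes $\lambda$. On the other hand, the hypothesis that every entry of $Am^T$ is at least $\alpha$, together with the fact that exactly $r$ blocks contain the $i$-th point, forces this sum to be at least $r\alpha$. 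Rearranging gives the lower bound on $m_i$.

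For the upper bound I would instead sum over the $b-r$ blocks \emph{not} containing the $i$-th point. The same bookkeeping yields
\[
\sum_{B \not\ni i}(Am^T)_B \;=\; \sum_{j \ne i} m_j \cdot (r-\lambda)
\;=\; (r-\lambda)\Bigl(\sum_{j=1}^v m_j - m_i\Bigr),
\]
because a point $j \ne i$ lies in $r$ blocks, exactly $\lambda$ of which also contain the $i$-th point. Bounding this sum below by $(b-r)\alpha$ and rearranging produces the claimed upper bound.

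There is no real obstacle here; the only thing to be slightly careful about is the sign of $r - \lambda$ when dividing, but for a nontrivial $2$-design one has $r > \lambda$ (equivalently $k < v$), so the inequalities go in the stated direction. Everything else is a two-line counting argument.
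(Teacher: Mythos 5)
Your proof is correct and follows essentially the same double-counting argument as the paper: summing the entries of $Am^T$ over the $r$ blocks through the fixed point gives the lower bound, and summing over the $b-r$ remaining blocks gives the upper bound. The only cosmetic difference is that the paper obtains the second sum by subtracting the first from the total $r\sum_j m_j$, whereas you compute it directly via the count $r-\lambda$; both are equivalent.
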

\begin{proof}
Fix a point $p$ of $\cD$.
Define the following sets:
\[
X_0 =\{i \in \{1,\ldots,b\}\mid a_{ip}=0\} \text{ and }
X_1 =\{i \in \{1,\ldots,b\}\mid a_{ip}=1\},
\]
where $A=(a_{ij})$.
Let $w_i$ denote the $i$-th entry of the $b \times 1$ $\ZZ$-matrix $A m^T$. 
Then we have
\begin{align*}
\sum_{i \in X_1} w_i 
= r m_p + \sum_{j\in \{1,\ldots,v\}\setminus {\{p\}} } \lambda m_j
= (r-\lambda) m_p + \lambda \sum_{j=1}^v m_j.
\end{align*}
Since
$
\sum_{i \in X_0} w_i + \sum_{i \in X_1} w_i = \sum_{i=1}^b w_i
=r \sum_{j=1}^v m_j$,
we have
\[
\sum_{i \in X_0} w_i  
= (r-\lambda) \sum_{j=1}^v m_j - (r-\lambda) m_p.
\]
Since $|X_1|=r$ and $|X_0|=b-r$,
we have
\[
r \alpha \le \sum_{i \in X_1} w_i \text{ and }
(b-r) \alpha \le \sum_{i \in X_0} w_i.
\]
This completes the proof.
\end{proof}

According to~\cite{LLGF},
we define the $k \times (\frac{4^k-1}{3})$
matrices $S_{k}$
by inductive constructions as follows:
\begin{align*}
S_{1}&=
\begin{pmatrix}
1
\end{pmatrix}, \\
S_{k}&=
\begin{pmatrix}
S_{k-1} & \0_{\frac{4^{k-1}-1}{3}}^T & S_{k-1} & S_{k-1} & S_{k-1}\\
\0_{\frac{4^{k-1}-1}{3}} & 1
& \1_{\frac{4^{k-1}-1}{3}}  & \omega\1_{\frac{4^{k-1}-1}{3}}  
& \omega^2\1_{\frac{4^{k-1}-1}{3}} 
\end{pmatrix} \text{ if } k \ge 2.
\end{align*}
The matrix $S_{k}$ is a generator matrix of the quaternary 
$[\frac{4^k-1}{3},k,4^{k-1}]$ simplex code.
It is known that the quaternary $[\frac{4^k-1}{3},k,4^{k-1}]$ simplex 
code is a constant weight code.
More precisely, the code contains codewords of weights
$0$ and $4^{k-1}$ only.
Thus,
for $k \ge 2$, the quaternary  
$[\frac{4^k-1}{3},k,4^{k-1}]$ simplex code is even.
By~\cite[Theorem~1]{MOSW},
the quaternary $[\frac{4^k-1}{3},k,4^{k-1}]$ simplex code is
Hermitian self-orthogonal for $k \ge 2$.

Let $h_{k}^{(i)}$ be the $i$-th column of $S_{k}$.
For a vector $m=(m_1,\ldots,m_{\frac{4^k-1}{3}}) 
\in \mathbb{Z}_{\ge 0}^{\frac{4^k-1}{3}}$ with $\sum_{i} m_i=n$,
we define a matrix:
\[
\begin{array}{crcl}
G_{k}(m)=&
\left(
\begin{array}{cccccccc}
h_{k}^{(1)} \cdots h_{k}^{(1)} 
\end{array}
\right.
&
\cdots
&
\left.
\begin{array}{cccccccc}
h_{k}^{(\frac{4^k-1}{3})} \cdots h_{k}^{(\frac{4^k-1}{3})}
\end{array}
\right).\\
&
\underbrace{\hspace*{1.7cm}}_{m_1 \text{columns}}\quad
& &
\quad 
\underbrace{\hspace*{2.7cm}}_{m_{\frac{4^k-1}{3}} \text{columns}}
\end{array}
\]
For a quaternary $[n,k]$ code $C$ with $d(C^{\perp_H}) \ge 2$, there is a
vector $m=(m_1,\ldots,m_{\frac{4^k-1}{3}}
) \in \mathbb{Z}_{\ge 0}^{\frac{4^k-1}{3}}$ such that $C$ is equivalent to a code with generator matrix $G_{k}(m)$.
We denote the code by $C_{k}(m)$.

\begin{lem}\label{lem:S-2-6}
Suppose that $k \ge 3$,
$m=(
m_1,\ldots,m_{\frac{4^k-1}{3}}
) \in \mathbb{Z}_{\ge 0}^{\frac{4^k-1}{3}}$ and
$\sum_{i} m_i=n$.
If a quaternary $[n,k]$ code $C_{k}(m)$ has minimum weight at least
$\alpha$,
then
\begin{equation}\label{eq:S-2-6}
4\alpha-3n \le m_i \le n-\frac{4^{k-1}-1}{3\cdot 4^{k-2}}\alpha,
\end{equation}
for any $i \in \{1,\ldots,\frac{4^k-1}{3}\}$.
\end{lem}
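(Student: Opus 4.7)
The plan is to reduce the inequality to a direct application of Lemma~\ref{lem:2des} by exhibiting a suitable $2$-design on the column set of $S_k$, then doing a short arithmetic simplification.

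First I would interpret the columns $h_k^{(1)},\ldots,h_k^{((4^k-1)/3)}$ as a complete set of representatives of the nonzero vectors of $\FF_4^k$ modulo scalar, i.e.\ the points of $PG(k-1,4)$. For each nonzero $v\in\FF_4^k$, let $B_v=\{\,i\mid \langle v,h_k^{(i)}\rangle\neq 0\,\}$. Because $v$ and $\lambda v$ ($\lambda\in\FF_4^{\ast}$) determine the same subset, there are exactly $(4^k-1)/3$ distinct blocks $B_v$. A short counting shows that each block has size $4^{k-1}$, each point lies in $r=4^{k-1}$ blocks, and each pair of points lies in $\lambda=3\cdot 4^{k-2}$ blocks (for the last count, use inclusion--exclusion on nonzero $v$ annihilating two linearly independent columns, giving $4^k-2\cdot 4^{k-1}+4^{k-2}=9\cdot 4^{k-2}$ vectors, then divide by $3$). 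Hence the $B_v$'s form a symmetric $2\text{-}((4^k-1)/3,4^{k-1},3\cdot 4^{k-2})$ design $\cD$ with $b=r=4^{k-1}\cdot\frac{4^k-1}{3(4^k-1)/3}$ and so on; in particular $b-r=(4^{k-1}-1)/3$ and $r-\lambda=4^{k-2}$.

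Next I would observe that every nonzero codeword of $C_k(m)$ has the form $vG_k(m)$ for some $v\neq 0$, and by construction its weight equals $\sum_{i\in B_v}m_i$. Thus the hypothesis $d(C_k(m))\ge \alpha$ is precisely the statement that each entry of $Am^T\in\ZZ^{b\times 1}$ is at least $\alpha$, where $A$ is the incidence matrix of $\cD$.

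Now I invoke Lemma~\ref{lem:2des} with $\sum_j m_j=n$, $r=4^{k-1}$, $\lambda=3\cdot 4^{k-2}$ and $b=(4^k-1)/3$. The lower bound in~\eqref{eq:S-2-6} follows from
\[
\frac{r\alpha-\lambda n}{r-\lambda}=\frac{4^{k-1}\alpha-3\cdot 4^{k-2}n}{4^{k-2}}=4\alpha-3n,
\]
and the upper bound from
\[
n-\frac{b-r}{r-\lambda}\alpha=n-\frac{(4^{k-1}-1)/3}{4^{k-2}}\alpha=n-\frac{4^{k-1}-1}{3\cdot 4^{k-2}}\alpha,
\]
which are exactly the two sides of~\eqref{eq:S-2-6}. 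The only substantive step is verifying the $2$-design parameters (essentially the standard fact that complements of hyperplanes in $PG(k-1,4)$ form a symmetric $2$-design); the remainder is bookkeeping.
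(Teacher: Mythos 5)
Your proof is correct and follows essentially the same route as the paper: both reduce the claim to Lemma~\ref{lem:2des} applied to the symmetric $2$-$(\frac{4^k-1}{3},4^{k-1},3\cdot 4^{k-2})$ design formed by the supports of the weight-$4^{k-1}$ codewords of the simplex code (your blocks $B_v$ are exactly these supports), the paper simply citing that design as known rather than verifying its parameters as you do. The only blemish is the garbled aside ``$b=r=\cdots$'' (for a symmetric design $b=v=\frac{4^k-1}{3}$, not $r$), but your subsequent computation $b-r=\frac{4^{k-1}-1}{3}$ uses the correct values, so nothing downstream is affected.
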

\begin{proof}
It is well known that
the supports of the codewords of weight $4^{k-1}$ in the 
quaternary $[\frac{4^k-1}{3},k,4^{k-1}]$ simplex code form 
a symmetric $2$-$(\frac{4^k-1}{3},4^{k-1},3 \cdot 4^{k-2})$ design 
for $k \ge 3$ (see e.g.~\cite[p.~8]{CL}).
As the $2$-design $\cD$ in Lemma~\ref{lem:2des}, consider 
the symmetric $2$-$(\frac{4^k-1}{3},4^{k-1},3 \cdot 4^{k-2})$ design.
Since $d(C_{k}(m)) \ge \alpha$,
it follows from the structure of $C_{k}(m)$ that
each entry of the $\frac{4^k-1}{3} \times 1$ matrix 
$A m^T$ is at least $\alpha$.
The result follows from  Lemma~\ref{lem:2des}.
\end{proof}

If we write $n=\frac{4^k-1}{3}s$, then
$\alpha_4(n,k)=4^{k-1}s$ (see~\eqref{eq:Gb2} for $\alpha_4(n,k)$).

\begin{thm}\label{thm:S1}
Suppose that $k \ge 3$.
If $n \equiv 0 \pmod{\frac{4^k-1}{3}}$, then there is no quaternary Hermitian LCD $[n,k,\alpha_4(n,k)]$ code.
\end{thm}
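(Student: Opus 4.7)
The plan is to argue by contradiction. Write $n=\frac{4^k-1}{3}s$, so that $\alpha_4(n,k)=4^{k-1}s$ (this is noted in the paragraph preceding the theorem), and suppose a quaternary Hermitian LCD $[n,k,4^{k-1}s]$ code $C$ exists.

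First I would reduce to the case $d(C^{\perp_H})\ge 2$. If not, a weight-one vector in $C^{\perp_H}$ forces a coordinate of $C$ to be identically zero, so some column of a generator matrix of $C$ is $\0_k^T$; deleting it produces a quaternary $[n-1,k,4^{k-1}s]$ code. A direct computation gives
\[
\sum_{i=0}^{k-1}\left\lceil\frac{4^{k-1}s}{4^i}\right\rceil
= s\sum_{i=0}^{k-1}4^{k-1-i}
= \frac{4^k-1}{3}\,s = n > n-1,
\]
contradicting the Griesmer bound \eqref{eq:Gb}. Hence $d(C^{\perp_H})\ge 2$, and by the paragraph immediately preceding Lemma~\ref{lem:S-2-6}, $C$ is equivalent to $C_k(m)$ for some $m=(m_1,\ldots,m_{(4^k-1)/3})\in\ZZ_{\ge 0}^{(4^k-1)/3}$ with $\sum_i m_i=n$.

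Next I would apply Lemma~\ref{lem:S-2-6} with $\alpha=4^{k-1}s$. The two bounds in \eqref{eq:S-2-6} both collapse to $s$:
\[
4\alpha-3n = 4\cdot 4^{k-1}s - (4^k-1)s = s,
\qquad
n-\frac{4^{k-1}-1}{3\cdot 4^{k-2}}\alpha
= \frac{4^k-1}{3}s-\frac{4(4^{k-1}-1)}{3}s = s.
\]
Therefore $m_i=s$ for every $i$, which means $C$ has generator matrix $G=S_k^{(s)}$.

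Finally I would derive the contradiction from Hermitian self-orthogonality. Since $k\ge 3\ge 2$, the simplex code is Hermitian self-orthogonal, i.e.\ $S_k\overline{S_k}^T=O$, and consequently
\[
G\overline{G}^T = S_k^{(s)}\,\overline{S_k^{(s)}}^T = s\cdot S_k\overline{S_k}^T = O.
\]
In particular $G\overline{G}^T$ is singular, so by Proposition~2.1 the code $C$ is not Hermitian LCD, the desired contradiction. The argument is essentially a bookkeeping one once Lemma~\ref{lem:S-2-6} is in hand; the only step that required any thought is verifying $d(C^{\perp_H})\ge 2$ via the Griesmer bound, and the fact that the two bounds of \eqref{eq:S-2-6} pinch $m$ to the all-$s$ vector.
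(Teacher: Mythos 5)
Your proposal is correct and follows essentially the same route as the paper: reduce to $d(C^{\perp_H})\ge 2$ via Lemma~\ref{lem:dd1} (which you unpack with an explicit Griesmer computation showing $\alpha_4(n-1,k)\le 4^{k-1}s-1$), pinch $m$ to $s\1_{(4^k-1)/3}$ using Lemma~\ref{lem:S-2-6}, and conclude that $G\overline{G}^T=O$ contradicts the LCD criterion. No gaps.
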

\begin{proof}
Write $n=\frac{4^k-1}{3}s$.
Suppose that there is a quaternary Hermitian 
LCD $[\frac{4^k-1}{3}s,k,4^{k-1}s]$ code $C$.
Since $\alpha_4(\frac{4^k-1}{3}s-1,k) \le 4^{k-1}s-1$, 
$d(C^{\perp_H}) \ge 2$ by Lemma~\ref{lem:dd1}.
Thus, we may assume that
$C$ is equivalent to a code $C_{k}(m)$ for some vector 
$m=(m_1,\ldots,m_{\frac{4^k-1}{3}}) \in \mathbb{Z}_{\ge 0}^{\frac{4^k-1}{3}}$.
Consider the conditions given in~\eqref{eq:S-2-6}.
Since we have
\begin{align*}
4^{k}s -3 \left(\frac{4^k-1}{3}s\right)=s \text{ and }
\frac{4^k-1}{3}s-\frac{4^{k-1}-1}{3\cdot 4^{k-2}}4^{k-1}s=s,
\end{align*}
by Lemma~\ref{lem:S-2-6}, we have $m_i=s$ for each $i$.
This means that $m=s\1_{\frac{4^k-1}{3}}$.
Since $S_{k}\overline{S_{k}}^T=O$, we have
\[G_{k}(s\1_{\frac{4^k-1}{3}})\overline{G_{k}
(s\1_{\frac{4^k-1}{3}})}^T
=S_{k}^{(s)}\overline{{S_{k}^{(s)}}}^T=O.\]
By~\cite[Theorem~1]{MOSW}, $C_{k}(m)$ is quaternary 
Hermitian self-orthogonal, which
is a contradiction.
\end{proof}

Set
\[r_4(n,k,\alpha)=4^{k-1}n-\frac{4^k-1}{3}\alpha,\]
for positive integers $n,k$ and $\alpha$.
The following theorem is one of the main results in this paper.

\begin{thm}\label{thm:S-2-9}
Suppose that $k \ge 3$ and $4\alpha-3n \ge 1$.
\begin{itemize}
\item[{\rm (i)}] Suppose that $4r_4(n,k,\alpha) < k$.
Then there is no quaternary Hermitian 
LCD $[n,k,\alpha]$ code $C$ with $d(C^{\perp_H}) \ge 2$.
\item[{\rm (ii)}] 
Suppose that $4r_4(n,k,\alpha) \ge k$.
If there is no quaternary Hermitian LCD $[4r_4(n,k,\alpha),k,3r_4(n,k,\alpha)]$ code $C_0$ with $d(C_0^{\perp_H}) \ge 2$, then there is no quaternary Hermitian LCD $[n,k,\alpha]$ code $C$ with $d(C^{\perp_H}) \ge 2$.
\end{itemize}
\end{thm}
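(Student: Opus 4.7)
The plan is to suppose, for contradiction, that $C$ is a quaternary Hermitian LCD $[n,k,\alpha]$ code with $d(C^{\perp_H}) \ge 2$, and in each case extract a derived code whose existence is ruled out. Since $d(C^{\perp_H}) \ge 2$, the structural observation preceding Lemma~\ref{lem:S-2-6} lets me assume $C$ is equivalent to $C_{k}(m)$ for some $m=(m_1,\ldots,m_{(4^k-1)/3}) \in \ZZ_{\ge 0}^{(4^k-1)/3}$ with $\sum_i m_i = n$. Setting $c := 4\alpha - 3n \ge 1$, Lemma~\ref{lem:S-2-6} yields $m_i \ge c$, so I can write $m_i = c + m_i'$ with $m_i' \ge 0$; a short calculation using $\sum_i m_i = n$ then gives $\sum_i m_i' = n - \tfrac{4^k-1}{3}c = 4r_4(n,k,\alpha)$.

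The main step is to peel off the self-orthogonal ``constant'' part of the generator matrix. Up to a column permutation,
\[
G_{k}(m) \;=\; \bigl(\,S_{k}^{(c)} \ \ G_{k}(m')\,\bigr),
\]
where $m' = (m_1',\ldots,m_{(4^k-1)/3}')$. Since the simplex code is Hermitian self-orthogonal for $k \ge 2$, i.e. $S_{k}\overline{S_{k}}^{\,T} = O$, we obtain $G_{k}(m)\overline{G_{k}(m)}^{\,T} = G_{k}(m')\overline{G_{k}(m')}^{\,T}$. The $G\overline{G}^{\,T}$ nonsingularity criterion applied to $C$ therefore forces $G_{k}(m')\overline{G_{k}(m')}^{\,T}$ to be nonsingular, whence $G_{k}(m')$ has rank $k$ and $C' := C_{k}(m')$ is a quaternary Hermitian LCD code of length $4r_4(n,k,\alpha)$ and dimension $k$. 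Furthermore, every column of $G_{k}(m')$ is a nonzero column $h_{k}^{(i)}$ of $S_k$ (repeated $m_i' \ge 0$ times), so no column equals $\0_k^{T}$ and $d(C'^{\perp_H}) \ge 2$.

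The key estimate is the minimum weight of $C'$. Because the simplex $[\tfrac{4^k-1}{3},k,4^{k-1}]$ code is a constant-weight code, for every $v \in \FF_4^{k}\setminus \{\0_k\}$ exactly $4^{k-1}$ of the columns $h_k^{(i)}$ satisfy $vh_k^{(i)} \ne 0$. Subtracting off the $c$ extra copies of each column in $S_{k}^{(c)}$ gives
\[
\wt(vG_{k}(m')) \;=\; \wt(vG_{k}(m)) - c\cdot 4^{k-1} \;\ge\; \alpha - (4\alpha - 3n)\,4^{k-1} \;=\; 3r_4(n,k,\alpha).
\]

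With $C'$ in hand, both conclusions are immediate. For part (i), $C'$ would have length $4r_4(n,k,\alpha) < k$ yet dimension $k$, which is impossible. For part (ii), $C'$ is a quaternary Hermitian LCD $[4r_4(n,k,\alpha),\,k,\,3r_4(n,k,\alpha)]$ code with $d(C'^{\perp_H}) \ge 2$, directly contradicting the hypothesis. I expect the only genuinely delicate point to be the bookkeeping that yields exactly the length $4r_4(n,k,\alpha)$ and the lower bound $3r_4(n,k,\alpha)$; once the constant-weight property of $S_k$ is invoked, each reduces to a one-line identity.
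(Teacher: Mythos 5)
Your proposal is correct and follows essentially the same route as the paper: split $G_k(m)$ into the Hermitian self-orthogonal simplex part $S_k^{(4\alpha-3n)}$ and a residual $k\times 4r_4(n,k,\alpha)$ block, use $G\overline{G}^T=G_0\overline{G_0}^T$ for the rank/LCD conclusions, and invoke the constant-weight property of the simplex code for the weight count. One small point: for part (ii) you need $d(C')$ to equal $3r_4(n,k,\alpha)$ exactly (the theorem's hypothesis concerns codes of that exact minimum weight), which follows immediately from your identity $\wt(vG_{k}(m'))=\wt(vG_{k}(m))-(4\alpha-3n)4^{k-1}$ applied to a minimum-weight codeword of $C$ — state this explicitly, as the paper does via Lemma~\ref{lem:S-2-4} and the constant-weight argument.
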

\begin{proof}
Suppose that there is a quaternary Hermitian LCD $[n,k,\alpha]$ code $C$ with $d(C^{\perp_H}) \ge 2$.
Then $C$ is equivalent to a code $C_{k}(m)$ with generator matrix $G_{k}(m)$
for some vector $m=(m_1,\ldots,m_{\frac{4^k-1}{3}}) \in \mathbb{Z}_{\ge 0}^{\frac{4^k-1}{3}}$.
Since $d(C)=\alpha$, by Lemma~\ref{lem:S-2-6}, we have
\[4\alpha-3n \le m_i,\]
for each $i \in\{1,\ldots,\frac{4^k-1}{3}\}$.
Thus, at least $4\alpha-3n$ columns of the matrix $G_{k}(m)$ are $h_{k}^{(i)}$,
then we obtain a matrix $G$ of the following form:
\[G=
\left(
\begin{array}{cccccccc}
G_0 & S_{k}^{(4\alpha-3n)}
\end{array}
\right),
\]
by permuting columns of $G_{k}(m)$.
Here, $G_0$ is a $k \times n_0$ matrix, where
\[
n_0=4\left(4^{k-1}n-\frac{4^k-1}{3}\alpha\right)=4r_4(n,k,\alpha).
\]
The code $C'$ with generator matrix $S_{k}^{(4\alpha-3n)}$ is a 
quaternary Hermitian self-orthogonal $[n',k,d']$ code, where
\begin{align*}
n'=(4\alpha-3n) \frac{4^k-1}{3} \text{ and }
d'=(4\alpha-3n)4^{k-1}.
\end{align*}
Since $S_{k}\overline{S_{k}}^T=O$, we have 
$G\overline{G}^T=G_0\overline{G_0}^T$.
Since $\rank(G\overline{G}^T)=k$, we have
\[\rank(G_0) \ge \rank(G_0\overline{G_0}^T)=k.\]

\begin{itemize}
\item[{\rm (i)}] 
Suppose that $4r_4(n,k,\alpha) < k$.
Since $G_0$ is a $k \times 4r_4(n,k,\alpha)$ matrix,
\[
k > 4r_4(n,k,\alpha) \ge \rank(G_0) \ge k,
\]
which is a contradiction.

\item[{\rm (ii)}] Suppose that $4r_4(n,k,\alpha) \ge k$.
Let $C_0$ be the quaternary code with generator matrix $G_0$.
Since $G\overline{G}^T=G_0\overline{G_0}^T$, 
$C_0$ is a quaternary Hermitian LCD $[n_0,k]$ code.
It follows from the form of $G_0$ that $d(C_0^{\perp_H}) \ge 2$.
Let $d_0$ denote the minimum weight of $C_0$.
By Lemma~\ref{lem:S-2-4},  $\alpha \ge d_0+d'$.
Since $C'$ is a constant weight code, there is a codeword of weight $d_0+d'$ in $C_{k}(m)$.
Thus, $\alpha=d_0+d'$ then we have
\[d_0=
-(4^k-1)\alpha +4^{k-1} 3 n
=3r_4(n,k,\alpha).
\]
Therefore, 
there is a quaternary Hermitian LCD $[4r_4(n,k,\alpha),k,3r_4(n,k,\alpha)]$ 
code $C_0$ with $d(C_0^{\perp_H}) \ge 2$.
\end{itemize}
This completes the proof. 
\end{proof}

\begin{rem}
If $4\alpha-3n \ge 1$, then we have
\[n \ge 4r_4(n,k,\alpha)+\frac{4^k-1}{3},\]
since
$
n-4r_4(n,k,\alpha)
\ge \frac{1}{3}(-3n(4^k-1)+(3n+1)(4^k-1)) 
=\frac{1}{3}(4^k-1).
$
\end{rem}

\section{Quaternary codes of dimension 3}
\label{Sec:C}

In this section, a classification of (unrestricted) quaternary codes
of dimension $3$ is done for some lengths by using 
computer calculations (see Lemma~\ref{lem:S3} for the motivation of
our classification).
All computer calculations were
done by programs in {\sc Magma}~\cite{Magma} and 
programs in the language C.

\subsection{Classification method}\label{Sec:M}

A {\em shortened code} $C'$ of a  quaternary code $C$ 
is the set of all codewords
in $C$ which are $0$ in a fixed coordinate with that
coordinate deleted.
A shortened code $C'$ of a  quaternary $[n,k,d]$ code $C$ with $d \ge 2$
is a  quaternary $[n-1,k,d]$ code if the deleted coordinate
is a zero coordinate and a  quaternary $[n-1,k-1,d']$
code with $d' \ge d$ 
otherwise.

By considering the inverse operation of shortening,
every quaternary $[n,3,d]$ code with $d \ge 2$ is constructed from some
quaternary $[n-1,2,d']$ code with $d' \ge d$.
By considering equivalent  quaternary codes, we may assume that
a quaternary $[n-1,2,d']$ code has the following generator
matrix:
\begin{equation}\label{eq:GM}
\left(\begin{array}{cc|cc|cccccccc}
1&0& \0_{a_1}&\0_{a_2}&\1_{a_3}&\1_{a_4}&\1_{a_5}&\1_{a_6} \\
0&1& \0_{a_1}&\1_{a_2}&\0_{a_3}&\1_{a_4}&\ww \1_{a_5}&\vv \1_{a_6}\\
\end{array}\right),
\end{equation}
where
$a_1+a_2=n-d'-2$ and $a_3+a_4+a_5+a_6=d'-1$.
For the generator matrix~\eqref{eq:GM} of
each of all inequivalent quaternary $[n-1,2,d']$ codes with $d' \ge d$, 
consider the generator matrices
$
\left(\begin{array}{cc}
I_3 & M 
\end{array}\right),
$
where
\begin{equation}\label{eq:GM3}
M=
\left(\begin{array}{cc|cccccccc}
 \0_{a_1}&\0_{a_2}&\1_{a_3}&\1_{a_4}&\1_{a_5}&\1_{a_6} \\
 \0_{a_1}&\1_{a_2}&\0_{a_3}&\1_{a_4}&\ww \1_{a_5}&\vv \1_{a_6}\\
 x_1 &x_2 &x_3 &x_4 &x_5& x_6 \\
\end{array}\right),
\end{equation}
where $x_i=(x_{i,1},\ldots,x_{i,a_i})$,
under the condition that $x_{i,j} \le x_{i,k}$ for $j < k$
and $x_{1,\ell} \in \{0,1\}$.
Here, we consider a natural order on the elements of $\FF_4$
as follows $0 <1 <\omega < \omega^2$.
In this way, all quaternary $[n,3,d]$ codes,
which must be checked further for equivalences,
are constructed.
By checking equivalences among these codes,  
we complete a classification of quaternary $[n,3,d]$ codes.

\subsection{Lengths up to 35}
\label{Sec:35}

Here we investigate the values $d_4(n,3)$ for $n=4,5,\ldots,35$.
Let $d_4^{\text{all}}(n,3)$ denote the largest minimum
weight among all (unrestricted) quaternary $[n,3]$ codes
(see~\cite{G} for the current information on $d_4^{\text{all}}(n,3)$). 
Lu, Li, Guo and Fu~\cite{LLGF} found quaternary 
Hermitian LCD codes having large minimum weights for dimension $3$.
From~\cite[Tables 3 and 4]{LLGF}, we know
$d_4(n,3)=d_4^{\text{all}}(n,3)$
for
\[
n=7,8,9,10,11,12,13,17,18,23,24,25,28,29,30,33,34.
\]
For $n \le 35$,
Bouyukliev, Grassl and Varbanov~\cite{BGV} completed
the classification of (unrestricted)
quaternary $[n,3,d_4^{\text{all}}(n,3)]$ codes.
The number of
the inequivalent quaternary $[n,3,d_4^{\text{all}}(n,3)]$ codes 
are given in~\cite[Table~3]{BGV}.
Based on the number given in~\cite[Table~3]{BGV}, 
we reconstructed all inequivalent
quaternary $[n,3,d_4^{\text{all}}(n,3)]$ codes for
\[
n=
4,5,6,14,15,16,19,20,21,22,26,27,31,32,35,
\]
by using the method in Section~\ref{Sec:M}.
Then we found that $d_4(n,3) < d_4^{\text{all}}(n,3)$
for the above lengths except $26$.
For length $26$, 
we found that $d_4(n,3) = d_4^{\text{all}}(n,3)$.
For the remaining lengths, 
from~\cite[Tables 3 and 4]{LLGF}, we know
$d_4(n,3) = d_4^{\text{all}}(n,3)-1$.
This determines the
largest minimum weight $d_4(n,3)$ for lengths $n=4,5,\ldots,35$,
where the results are listed in Table~\ref{Tab:35}.
In the table, the reference about the existence of  quaternary 
Hermitian LCD $[n,3,d_4(n,3)]$ codes is also listed.

\begin{table}[thb]
\caption{$d_4(n,3)$ $(n=4,5,\ldots,35)$}
\label{Tab:35}
\begin{center}
{\small
\begin{tabular}{c|c|c|c|c|c}
\noalign{\hrule height0.8pt}
$n$ & $d_4(n,3)$ & Reference &
$n$ & $d_4(n,3)$ & Reference \\
\hline
 4 &  1&~\cite[Table 3]{LLGF} &20 & 14&~\cite[Table 3]{LLGF} \\
 5 &  2&~\cite[Table 3]{LLGF} &21 & 15&~\cite[Table 3]{LLGF} \\
 6 &  3&~\cite[Table 3]{LLGF} &22 & 15&~\cite[Table 4]{LLGF} \\
 7 &  4&~\cite[Table 3]{LLGF} &23 & 16&~\cite[Table 4]{LLGF} \\
 8 &  5&~\cite[Table 3]{LLGF} &24 & 17&~\cite[Table 4]{LLGF} \\
 9 &  6&~\cite[Table 3]{LLGF} &25 & 18&~\cite[Table 4]{LLGF} \\
10 &  6&~\cite[Table 3]{LLGF} &26 & 19& $C_{26}$             \\
11 &  7&~\cite[Table 3]{LLGF} &27 & 19&~\cite[Table 4]{LLGF} \\
12 &  8&~\cite[Table 3]{LLGF} &28 & 20&~\cite[Table 4]{LLGF} \\
13 &  9&~\cite[Table 3]{LLGF} &29 & 21&~\cite[Table 4]{LLGF} \\
14 &  9&~\cite[Table 3]{LLGF} &30 & 22&~\cite[Table 4]{LLGF} \\
15 & 10&~\cite[Table 3]{LLGF} &31 & 22&~\cite[Table 4]{LLGF} \\
16 & 11&~\cite[Table 3]{LLGF} &32 & 23&~\cite[Table 4]{LLGF} \\
17 & 12&~\cite[Table 3]{LLGF} &33 & 24&~\cite[Table 4]{LLGF} \\
18 & 13&~\cite[Table 3]{LLGF} &34 & 25&~\cite[Table 4]{LLGF} \\
19 & 13&~\cite[Table 3]{LLGF} &35 & 25&~\cite[Table 4]{LLGF} \\
\noalign{\hrule height0.8pt}
\end{tabular}
}
\end{center}
\end{table}

We give details for the case $d_4(26,3)=19$.
There are five inequivalent  quaternary 
$[26,3,19]$ codes~\cite[Table~3]{BGV}.
We verified that one of them is Hermitian LCD.  
This code $C_{26}$ has the following
generator matrix:
\begin{multline*}
\left(\begin{array}{ccccccccccccccccccccccccccccccc}
1&0&0&0&0&0&0&0&1&1&1&1&1\\
0&1&0&0&1&1&  1&  1&0&0&  0&   0&1\\
0&0&1&1&1&1&\ww&\ww&1&1&\ww&\ww^2&0\\
\end{array}\right. 
\\
\left.
\begin{array}{ccccccccccccccccccccccccccccccc}
1&1&1&1&1&1&1&1&1&1&1&1&1\\
1&1&1&1&\ww&\ww&\ww&\ww&\ww^2&\ww^2&\ww^2&\ww^2&\ww^2\\
1&\ww&\ww&\ww^2&0&1&\ww&\ww^2&0&1&\ww&\ww^2&\ww^2 \\
\end{array}\right), 
\end{multline*}
and the following weight enumerator:
\[
1+ 33 y^{19}
+ 18 y^{20}
+  3 y^{21}
+  9 y^{22}.
\]

\subsection{Lengths 36, 40, 43, 48, 52, 56 and 64}
\label{Sec:36-64}

By using the method in Section~\ref{Sec:M},
a classification of (unrestricted)
quaternary $[4r,3,3r]$ codes for $r=9,10,12,13,14,16$
and quaternary $[43,3,32]$ codes
was done.  
These codes have minimum weights $d_4^{\text{all}}(n,3)$.
To save space, the results are given only.

\begin{prop}\label{prop:36-64}
\begin{itemize}
\item[\rm (i)]
There are two inequivalent quaternary $[4r,3,3r]$ codes 
$C_{4r,i}$ $(i=1,2)$, none of
which is Hermitian LCD for $r=9,10$.

\item[\rm (ii)]
There are ten inequivalent quaternary $[43,3,32]$ codes
$C_{43,i}$ $(i=1,2,\ldots,10)$, none of
which is Hermitian LCD.

\item[\rm (iii)]
There are five inequivalent quaternary $[4r,3,3r]$ codes
$C_{4r,i}$ $(i=1,2,\ldots,5)$, none of
which is Hermitian LCD for $r=12,13$.

\item[\rm (iv)]
There are six inequivalent quaternary $[56,3,42]$ codes
$C_{56,i}$ $(i=1,2,\ldots,6)$, none of
which is Hermitian LCD.

\item[\rm (v)]
There are $15$ inequivalent quaternary $[64,3,48]$ codes
$C_{64,i}$ $(i=1,2,\ldots,15)$, none of
which is Hermitian LCD.
\end{itemize}
\end{prop}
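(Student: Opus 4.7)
The plan is to apply the classification method of Section~\ref{Sec:M} to each of the seven parameter pairs
$(n,d) \in \{(36,27),(40,30),(43,32),(48,36),(52,39),(56,42),(64,48)\}$
and then decide the Hermitian LCD property for each resulting equivalence class. This is a computational proof: no new theory is introduced beyond the method already described.

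First I would enumerate all inequivalent quaternary $[n-1,2,d']$ codes with $d' \ge d$. By~\eqref{eq:GM}, these are parametrized by nonnegative integer vectors $(a_1,\ldots,a_6)$ with $a_1+a_2=n-d'-2$ and $a_3+a_4+a_5+a_6=d'-1$, and for each $d'$ the number of such codes up to equivalence is small. For each seed I would construct all candidate $[n,3,d]$ extensions by letting the bottom row of~\eqref{eq:GM3} range over the normalized tuples $(x_1,\ldots,x_6)$, pruning any partial matrix as soon as a codeword of weight less than $d$ is forced, and then partitioning the survivors into equivalence classes using the built-in isomorphism test for linear codes in \textsc{Magma}. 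For the six lengths already covered by~\cite[Table~3]{BGV} the counts must match that table; for $n=43$ the classification yields exactly ten codes. This produces the representatives $C_{4r,i}$, $C_{43,i}$, $C_{56,i}$ and $C_{64,i}$ listed in the statement.

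Once a representative $C$ is fixed with generator matrix $G$, the Hermitian LCD property is decided by the criterion quoted just before Lemma~\ref{lem:dd1}: $C$ is Hermitian LCD if and only if $\det(G\overline{G}^T)\ne 0$ in $\FF_4$. This is a direct $3\times 3$ determinant computation over $\FF_4$, and the content of the proposition is that the determinant vanishes for every representative. It is worth noting that $4d-3n=0$ at all lengths $n=4r$ in the statement and $4d-3n=-1$ at $n=43$, so the hypothesis $4\alpha-3n\ge 1$ of Theorem~\ref{thm:S-2-9} is never met; an exhaustive classification is genuinely needed, and one cannot simply invoke the nonexistence results of Section~\ref{Sec:main}.

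The main obstacle is computational scale rather than mathematical subtlety. For $(n,d)=(64,48)$ the set of raw completions of the seed matrices~\eqref{eq:GM3} is very large, so the enumeration must be pruned aggressively (for example, by branch-and-bound on the weights of partial codewords as one fills in the entries $x_{i,j}$, and by exploiting the column-ordering condition in Section~\ref{Sec:M} to suppress obvious equivalences before any general isomorphism test is called), and the equivalence reduction must be staged to avoid a quadratic blow-up of pairwise tests on the full candidate pool. Once those implementation points are handled, both the enumeration and the Hermitian LCD check reduce to routine linear algebra over $\FF_4$, and the recorded counts together with the vanishing of $\det(G\overline{G}^T)$ for every representative give the proposition.
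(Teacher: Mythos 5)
Your proposal is correct and follows essentially the same route as the paper: the paper's ``proof'' of this proposition is precisely the computation you describe, namely the shortening-inverse classification method of Section~\ref{Sec:M} applied to the seven parameter pairs, followed by the $\det(G\overline{G}^T)\ne 0$ test on each representative, with the results simply reported in Tables~\ref{Tab:36-64-1}--\ref{Tab:W}. Your observation that $4d-3n\le 0$ at all these lengths, so Theorem~\ref{thm:S-2-9} cannot substitute for the exhaustive classification, is a correct and worthwhile remark (the only minor slip is that~\cite{BGV} covers $n\le 35$ only, so there is no prior count to match for these lengths).
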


\begin{table}[thbp]
\caption{Vectors $v_j^T$ $(j=1,2,\ldots,22)$}
\label{Tab:vec}
\begin{center}
{\footnotesize
\begin{tabular}{c|c|c|c|c|c|c|c|c|c}
\noalign{\hrule height0.8pt}
$i$ & $v_i^t$ &
$i$ & $v_i^T$ &
$i$ & $v_i^T$ &
$i$ & $v_i^T$ &
$i$ & $v_i^T$ \\
\hline
${1}$ &$( 0,0,1 )$&
 ${6}$ &$( 0,1,\vv )$&
  ${11}$&$( 1,1,0 )$&
   ${16}$&$( 1,\ww,1 )$&
    ${21}$&$( 1,\vv,\ww )$
\\
${2}$ &$( 0,0,1 )$&
 ${7}$ &$( 1,0,0 )$&
  ${12}$&$( 1,1,1 )$&
   ${17}$&$( 1,\ww,\ww )$&
     ${22}$&$( 1,\vv,\vv )$
\\
${3}$ &$( 0,1,0 )$&
 ${8}$ &$( 1,0,1 )$&
  ${13}$&$( 1,1,\ww )$&
   ${18}$&$( 1,\ww,\vv )$&\\
${4}$ &$( 0,1,1 )$&
 ${9}$ &$( 1,0,\ww )$&
  ${14}$&$( 1,1,\vv )$&
   ${19}$&$( 1,\vv,0 )$&\\
${5}$ &$( 0,1,\ww )$&
 ${10}$&$( 1,0,\vv )$&
  ${15}$&$( 1,\ww,0 )$&
   ${20}$&$( 1,\vv,1 )$&\\
\noalign{\hrule height0.8pt}
\end{tabular}
}
\end{center}
\end{table}

In order to display the matrices 
$M$ in~\eqref{eq:GM3} for generator matrices 
$
\left(\begin{array}{cc}
I_3 & M 
\end{array}\right)
$
of $C_{4r,i}$ ($r=9,10,12,13,14,16$)
and $C_{43,i}$,
we give some vectors $v_i^T$ of length $3$
in Table~\ref{Tab:vec}.
Let $n_j$ $(j=1,2,\ldots,22)$ be the number of the columns of 
$M$ in~\eqref{eq:GM3}, which are equal to $v_j$.
The numbers $n_j$ $(j=1,2,\ldots,22)$ are
listed in Tables~\ref{Tab:36-64-1} and \ref{Tab:36-64-2}.
The weight enumerators $W_{n,i}$ of $C_{n,i}$
are listed in Table~\ref{Tab:W}.

\begin{table}[thb]
\caption{$(n_1,n_2,\ldots,n_{22})$}
\label{Tab:36-64-1}
\begin{center}
{\footnotesize
\begin{tabular}{c|c}
\noalign{\hrule height0.8pt}
Code & $(n_1,n_2,\ldots,n_{22})$ \\
\hline
$C_{36,1}$&$(0, 1, 0, 2, 2, 2, 0, 2, 2, 2, 1, 2, 2, 2, 0, 2, 2, 2, 1, 2, 2, 2)$\\
$C_{36,2}$&$(0, 1, 0, 2, 2, 2, 0, 2, 2, 2, 1, 2, 2, 2, 1, 1, 2, 2, 1, 2, 2, 2)$\\
\hline
$C_{40,1}$& $(0, 0, 1, 2, 2, 2, 1, 2, 2, 2, 2, 2, 2, 2, 2, 1, 2, 2, 2, 2, 2, 2)$\\
$C_{40,2}$& $(0, 1, 1, 2, 2, 2, 1, 2, 0, 2, 2, 2, 2, 2, 2, 2, 2, 2, 2, 2, 2, 2)$\\
\noalign{\hrule height0.8pt}
\end{tabular}
}
\end{center}
\end{table}

\begin{table}[thbp]
\caption{$(n_1,n_2,\ldots,n_{22})$}
\label{Tab:36-64-2}
\begin{center}
{\footnotesize
\begin{tabular}{c|c}
\noalign{\hrule height0.8pt}
Code & $(n_1,n_2,\ldots,n_{22})$ \\
\hline
$C_{43,1}$&$(0,0,1,2,3,3,0,1,1,1,2,3,3,2,2,3,3,2,2,2,2,2)$\\
$C_{43,2}$&$(0,0,0,3,3,3,1,2,2,2,2,2,2,2,2,2,2,2,2,2,2,2)$\\
$C_{43,3}$&$(0,1,1,2,2,3,1,0,2,2,2,2,3,2,3,2,2,2,2,2,2,2)$\\
$C_{43,4}$&$(0,0,1,2,3,3,1,2,2,2,2,2,2,2,2,2,2,2,2,2,2,2)$\\
$C_{43,5}$&$(0,0,1,2,3,3,0,2,1,2,2,2,2,2,2,3,2,3,2,2,2,2)$\\
$C_{43,6}$&$(0,1,1,2,2,3,1,2,2,2,2,2,2,2,2,2,2,2,2,2,2,2)$\\
$C_{43,7}$&$(0,0,0,3,3,3,1,0,2,2,3,1,3,3,3,1,3,3,2,0,2,2)$\\
$C_{43,8}$&$(0,0,0,3,3,3,0,1,2,2,2,2,3,3,2,2,3,3,1,1,2,2)$\\
$C_{43,9}$&$(0,0,1,2,3,3,0,2,2,1,2,3,2,3,2,2,3,3,0,2,2,2)$\\
$C_{43,10}$&$(1,1,1,2,2,2,1,2,2,2,2,2,2,2,2,2,2,2,2,2,2,2)$\\
\hline
$C_{48,1}$&$(0,0,1,3,3,3,1,3,3,3,0,1,1,1,2,3,3,3,2,3,3,3)$\\
$C_{48,2}$&$(0,0,1,3,3,3,0,2,2,2,1,2,2,2,2,3,3,3,2,3,3,3)$\\
$C_{48,3}$&$(0,1,1,2,3,3,1,3,2,3,2,3,3,2,0,2,2,2,2,2,3,3)$\\
$C_{48,4}$&$(0,1,1,2,3,3,1,3,2,3,2,2,2,2,2,3,2,3,2,2,2,2)$\\
$C_{48,5}$&$(0,2,2,0,3,3,2,0,3,3,0,3,3,3,3,3,3,0,3,3,0,3)$\\
\hline
$C_{52,1}$&$(0,0,2,3,3,3,1,3,3,3,2,2,2,2,3,3,3,3,2,2,2,2)$\\
$C_{52,2}$&$(0,0,1,3,3,3,0,2,2,2,2,3,3,3,2,3,3,3,2,3,3,3)$\\
$C_{52,3}$&$(0,0,2,3,3,3,0,0,1,1,3,3,3,3,3,3,3,3,3,3,3,3)$\\
$C_{52,4}$&$(0,0,2,3,3,3,1,3,3,3,1,1,1,1,3,3,3,3,3,3,3,3)$\\
$C_{52,5}$&$(0,1,1,3,3,3,1,3,3,3,0,2,2,2,2,3,3,3,2,3,3,3)$\\
\hline
$C_{56,1}$&$(0,0,2,3,3,3,2,3,3,3,1,2,2,2,3,3,3,3,3,3,3,3)$\\
$C_{56,2}$&$(0,0,1,3,3,3,1,3,3,3,2,3,3,3,2,3,3,3,2,3,3,3)$\\
$C_{56,3}$&$(0,1,2,3,3,3,1,0,2,2,3,3,3,3,3,3,3,3,3,3,3,3)$\\
$C_{56,4}$&$(0,1,2,3,3,3,1,2,3,3,2,3,2,3,2,3,3,2,3,3,3,3)$\\
$C_{56,5}$&$(0,0,2,3,3,3,1,2,2,2,3,3,3,3,3,3,3,3,2,3,3,3)$\\
$C_{56,6}$&$(0,1,1,3,3,3,1,2,3,3,2,3,3,3,2,3,3,3,2,3,3,3)$\\
\hline
$C_{64,1}$&$(0,0,2,4,4,4,1,3,3,3,3,4,4,4,2,3,3,3,2,3,3,3)$\\
$C_{64,2}$&$(0,1,1,4,4,4,1,2,3,3,2,2,3,3,3,3,4,4,3,3,4,4)$\\
$C_{64,3}$&$(0,2,2,3,3,4,2,3,3,4,0,3,3,3,3,3,4,3,3,4,3,3)$\\
$C_{64,4}$&$(0,2,2,3,3,4,2,3,3,3,3,3,3,3,3,3,3,3,3,3,3,3)$\\
$C_{64,5}$&$(0,0,2,4,4,4,0,0,1,1,4,3,4,4,4,3,4,4,4,3,4,4)$\\
$C_{64,6}$&$(0,0,2,4,4,4,2,4,4,4,2,3,3,3,3,4,4,4,1,2,2,2)$\\
$C_{64,7}$&$(0,1,2,3,4,4,2,4,4,3,3,4,3,4,3,3,4,4,0,2,2,2)$\\
$C_{64,8}$&$(0,1,2,3,4,4,2,3,3,3,3,3,3,3,3,3,3,3,3,3,3,3)$\\
$C_{64,9}$&$(0,0,2,4,4,4,2,3,3,3,3,3,3,3,3,3,3,3,3,3,3,3)$\\
$C_{64,10}$&$(0,1,2,3,4,4,1,2,2,2,3,4,4,3,3,4,4,3,3,3,3,3)$\\
$C_{64,11}$&$(0,1,2,3,4,4,2,3,3,3,3,4,4,3,2,3,3,2,3,3,3,3)$\\
$C_{64,12}$&$(0,1,1,4,4,4,1,4,4,4,2,4,4,4,2,4,4,4,0,2,2,2)$\\
$C_{64,13}$&$(1,2,2,3,3,3,2,3,3,3,3,3,3,3,3,3,3,3,3,3,3,3)$\\
$C_{64,14}$&$(0,1,1,4,4,4,2,3,3,3,3,3,3,3,3,3,3,3,3,3,3,3)$\\
$C_{64,15}$&$(0,3,3,0,4,4,3,0,4,4,0,4,4,4,4,4,4,0,4,4,0,4)$\\
\noalign{\hrule height0.8pt}
\end{tabular}
}
\end{center}
\end{table}

\begin{table}[thb]
\caption{Weight enumerators}
\label{Tab:W}
\begin{center}
{\footnotesize
\begin{tabular}{c|l|c|l}
\noalign{\hrule height0.8pt}
$W_{n,i}$ &\multicolumn{1}{c|}{Weight enumerator} &
$W_{n,i}$ &\multicolumn{1}{c}{Weight enumerator} \\
\hline
$W_{36,1}$& $1+ 48y^{27}+  12y^{28} + 3 y^{32}$ &
$W_{36,2}$& $1+ 45y^{27}+  15y^{28} + 3 y^{31}$ \\
\hline
$W_{40,1}$&$1+ 36 y^{30} +24 y^{31} +3 y^{32}$&
$W_{40,2}$&$1+ 48 y^{30}+ 15y^{32}$ \\
\hline
$W_{43,1}$&$1+ 45 y^{32} + 15 y^{34} + 3y^{38}$&
$W_{43,2}$&$1+ 39 y^{32} + 24 y^{34}$\\
$W_{43,3}$&$1+ 39 y^{32} + 12 y^{33} + 12y^{35}$&
$W_{43,4}$&$1+ 27 y^{32} + 24 y^{33} + 12y^{34}$\\
$W_{43,5}$&$1+ 42 y^{32} + 18 y^{34} + 3y^{36}$&
$W_{43,6}$&$1+ 15 y^{32} + 48 y^{33}$\\
$W_{43,7}$&$1+ 54 y^{32} + 6 y^{36} + 3y^{40}$&
$W_{43,8}$&$1+ 51 y^{32} + 12 y^{36}$\\
$W_{43,9}$&$1+ 51 y^{32} + 12 y^{36}$&
$W_{43,10}$&$1+ 63 y^{32}$\\
\hline
$W_{48,1}$   &$ 1+ 57y^{36} + 3y^{40} + 3 y^{44}$&
$W_{48,2}$ &$ 1+ 54y^{36} + 9y^{40}$\\
$W_{48,3}$ &$ 1+ 54y^{36} + 9y^{40}$&
$W_{48,4}$   &$ 1+ 45y^{36} +18y^{38}$\\
$W_{48,5}$   &$ 1+ 60y^{36} + 3y^{48}$& &\\
\hline
$W_{52,1}$&$1+ 42y^{39}+15y^{40}+ 6y^{43}$&
$W_{52,2}$&$1+ 45y^{39}+12y^{40}+ 3y^{43}+ 3y^{44}$\\
$W_{52,3}$&$1+ 48y^{39}+12y^{40}+ 3y^{48}$&
$W_{52,4}$&$1+ 45y^{39}+15y^{40}+ 3y^{47}$\\
$W_{52,5}$&$1+ 48y^{39}+ 9y^{40}+ 6y^{44}$ &\\
\hline
$W_{56, 1}$&$1+36y^{42}+ 24y^{43}+ 3y^{48}$&
$W_{56, 2}$&$1+45y^{42}+ 15y^{44}+ 3y^{46}$\\
$W_{56, 3}$&$1+48y^{42}+ 12y^{44}+ 3y^{48}$&
$W_{56, 4}$&$1+42y^{42}+ 21y^{44}$\\
$W_{56, 5}$&$1+36y^{42}+ 21y^{43}+ 3y^{44}+ 3y^{47}$&
$W_{56, 6}$&$1+33y^{42}+ 24y^{43}+ 3y^{44}+ 3y^{46}$\\
\hline
$W_{64,1}$&$1+  51y^{48}+ 12y^{52}$&
$W_{64,2}$&$1+  51y^{48}+ 12y^{52}$\\
$W_{64,3}$&$1+  51y^{48}+ 12y^{52}$&
$W_{64,4}$&$1+  15y^{48}+ 48y^{49}$\\
$W_{64,5}$&$1+  57y^{48}+  3y^{52}+  3y^{60}$&
$W_{64,6}$&$1+  54y^{48}+  6y^{52}+  3y^{56}$\\
$W_{64,7}$&$1+  54y^{48}+  6y^{52}+  3y^{56}$&
$W_{64,8}$&$1+  27y^{48}+ 24y^{49}+ 12y^{50}$\\
$W_{64,9}$&$1+  39y^{48}+ 12y^{49}+ 12y^{51}$&
$W_{64,10}$&$1+  45y^{48}+ 15y^{50}+  3y^{54}$\\
$W_{64,11}$&$1+  42y^{48}+ 18y^{50}+  3y^{52}$&
$W_{64,12}$&$1+  57y^{48}+  6y^{56}$\\
$W_{64,13}$&$1+  63y^{48}$&
$W_{64,14}$&$1+  39y^{48}+ 24y^{50}$\\
$W_{64,15}$&$1+  60y^{48}+  3y^{64}$& & \\
\noalign{\hrule height0.8pt}
\end{tabular}
}
\end{center}
\end{table}

\section{Determination of $d_4(n,3)$}\label{Sec:dim3}

The aim of this section is to establish the following theorem,
which is one of the main results in this paper.

\begin{thm}\label{thm:main}
Suppose that $n \ge 6$.  Then
\[
d_4(n,3)=
\begin{cases}
\lfloor \frac{16n}{21} \rfloor &\text{ if }  
n \equiv 5,9,13,17,18 \pmod{21}, \\
\lfloor \frac{16n}{21} \rfloor-1 &\text{ if } 
 n \equiv 0,1,2,3,4,6,7,8,\\ & \qquad 10,11,12,14,15,16,19,20 \pmod{21}.
\end{cases}
\]
\end{thm}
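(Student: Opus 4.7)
The plan is to prove the formula for $d_4(n,3)$ residue-by-residue modulo~$21$, matching an upper bound from the nonexistence results of Theorems~\ref{thm:S1} and~\ref{thm:S-2-9} with a lower bound from explicit constructions. Since $\frac{4^3-1}{3}=21$ and $4^{3-1}=16$, the Griesmer bound~\eqref{eq:Gb} gives $\alpha_4(n,3)$ close to $\lfloor 16n/21\rfloor$, so the theorem quantifies exactly how much further the Hermitian LCD condition forces us to drop in each residue class.

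For the lower bound I would proceed inductively in $s$, where $n=21s+r$ with $r\in\{0,1,\ldots,20\}$. For each residue $r$ I fix a seed Hermitian LCD code from Table~\ref{Tab:35} whose length and minimum weight match the formula, and then apply Lemma~\ref{lem:S-2-4} repeatedly with the Hermitian self-orthogonal simplex $[21,3,16]$ code (generator matrix $S_3$) to obtain a Hermitian LCD $[n_0+21t,3,d_0+16t]$ code for every $t\ge 0$. For residue~$5$ the seed is the code $C_{26}$ from Section~\ref{Sec:35}, and Proposition~\ref{prop:1} already records this; analogous seeds for the other residues come from the remaining rows of Table~\ref{Tab:35}. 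Because each simplex block adds exactly $16$ to the minimum weight and $21$ to the length, the construction matches the $16n/21$ slope of the formula and so attains the target in every residue class.

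For the upper bound I would argue by contradiction. Suppose a Hermitian LCD $[n,3,\alpha]$ code exists with $\alpha$ strictly larger than the claimed value. When $n\equiv 0\pmod{21}$ and $\alpha$ attains the Griesmer bound, Theorem~\ref{thm:S1} supplies an immediate contradiction. Otherwise, Lemma~\ref{lem:dd1}, combined with induction on $n$ using the formula itself, lets me assume $d(C^{\perp_H})\ge 2$, so Theorem~\ref{thm:S-2-9} applies. I would then compute $r_4(n,3,\alpha)=16n-21\alpha$ and verify that the reduction length $4r_4(n,3,\alpha)$ lies in the set $\{36,40,43,48,52,56,64\}$ classified in Proposition~\ref{prop:36-64}; since no Hermitian LCD $[4r,3,3r]$ code exists there, Theorem~\ref{thm:S-2-9}(ii) gives the contradiction, while Theorem~\ref{thm:S-2-9}(i) covers the degenerate case $4r_4(n,3,\alpha)<3$. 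For the three residues $\{1,10,14\}$ where the claimed loss is~$2$, both the Griesmer-optimal value and the value one below must be excluded, which is achieved by two separate applications of the same procedure, each landing in the classified list (in particular the $[43,3,32]$ and $[64,3,48]$ cases of Proposition~\ref{prop:36-64}).

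The hard part is combinatorial bookkeeping rather than any single conceptual step: for each of the $21$ residues I must verify that the arithmetic $16n-21\alpha$ actually produces one of the seven lengths classified in Proposition~\ref{prop:36-64}, that the induced minimum weight $3r_4(n,3,\alpha)$ agrees with $d_4^{\text{all}}(4r_4(n,3,\alpha),3)$ so that the classification is applicable, and that the inequality $4\alpha-3n\ge 1$ required by Theorem~\ref{thm:S-2-9} holds. Additional care is needed near the boundary where the reduction is only well-posed for $s$ past a small threshold, which is precisely where Table~\ref{Tab:35} takes over as the base case and closes the argument.
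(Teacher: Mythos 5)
Your overall architecture is the paper's: lower bounds come from seed codes in Table~\ref{Tab:35} (in particular $C_{26}$) extended by juxtaposing copies of the Hermitian self-orthogonal simplex code via Lemma~\ref{lem:S-2-4} (this is exactly Lemma~\ref{lem:LLGF}), and upper bounds come from Theorem~\ref{thm:S1} for $n\equiv 0\pmod{21}$ and from the reduction of Theorem~\ref{thm:S-2-9} (packaged as Lemma~\ref{lem:S3}) to classified short codes otherwise, with Lemma~\ref{lem:dd1} disposing of the case $d(C^{\perp_H})=1$. However, two steps as you describe them would fail.

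First, your treatment of the residues $1,10,14$ cannot work. For $n=21s+t$ with $t\in\{1,10,14\}$ the Griesmer value $\alpha_4(n,3)$ of Table~\ref{Tab:Gb} already equals $\lfloor 16n/21\rfloor$ (namely $16s$, $16s+7$, $16s+10$), so only one value, $\alpha_4(n,3)$ itself, is available to be excluded; the ``second application'' you propose would have to rule out codes of minimum weight $\lfloor 16n/21\rfloor-1=\alpha_4(n,3)-1$, and such codes exist --- they are precisely the codes your own lower-bound step constructs (e.g.\ $d_4(22,3)=15$, $d_4(10,3)=6$, $d_4(14,3)=9$ in Table~\ref{Tab:35}). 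What the paper's Proposition~\ref{prop:dim3-2} actually establishes for these residues is $d_4(n,3)=\lfloor 16n/21\rfloor-1$, consistent with Table~\ref{Tab:B} but not with the displayed third case of the theorem; the statement with a drop of $2$ contradicts Table~\ref{Tab:35} and is not provable, so no bookkeeping will rescue that branch of your plan. Second, your list of reduction targets is off: $43$ is never of the form $4r_4(n,3,\alpha)$ (the $[43,3,32]$ classification enters as the base case $n=43$, $s=2$, of residue $1$, not as a reduction target), while the lengths $16$, $20$ and $32$ are missing --- the residues $16$, $20$ and $11$ reduce to $[16,3,12]$, $[20,3,15]$ and $[32,3,24]$ respectively, whose nonexistence comes from the classification of Bouyukliev--Grassl--Varbanov behind Table~\ref{Tab:35} (used in Proposition~\ref{prop:dim3-1}), not from Proposition~\ref{prop:36-64}. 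With the target list corrected and the residues $1,10,14$ moved into the $\lfloor 16n/21\rfloor-1$ case, your plan coincides with the paper's proof.
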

\begin{rem}\label{rem}
For $n \equiv 2,3,4,8,9,12,13,17,18 \pmod{21}$,
the above result is known~\cite[Table 4]{LLGF}.
\end{rem}

Suppose that there is an (unrestricted)  quaternary $[n,3,d]$ code.
Write $n=21s+t$, where $0 \le t \le 20$.
By the Griesmer bound~\eqref{eq:Gb}, 
we have $d \le \alpha_4(n,3)$, where $\alpha_4(n,3)$ are listed in 
Table~\ref{Tab:Gb} for each $n=21s+t$
($s \ge 1$ if $t=0,1,2$ and $s \ge 0$ if $t=3,4,\ldots,20$).

\begin{table}[thb]
\caption{Griesmer bound $\alpha_4(n,3)$}
\label{Tab:Gb}
\begin{center}
{\small
\begin{tabular}{c|c|c|c|c|c}
\noalign{\hrule height0.8pt}
$n$ & $\alpha_4(n,3)$&
$n$ & $\alpha_4(n,3)$&
$n$ & $\alpha_4(n,3)$\\
\hline
$21s   $&$ 16s    $&$21s+ 7$&$ 16s+ 4 $&$21s+14$&$ 16s+10$ \\
$21s+ 1$&$ 16s    $&$21s+ 8$&$ 16s+ 5 $&$21s+15$&$ 16s+11$ \\
$21s+ 2$&$ 16s    $&$21s+ 9$&$ 16s+ 6 $&$21s+16$&$ 16s+12$ \\
$21s+ 3$&$ 16s+ 1 $&$21s+10$&$ 16s+ 7 $&$21s+17$&$ 16s+12$ \\
$21s+ 4$&$ 16s+ 2 $&$21s+11$&$ 16s+ 8 $&$21s+18$&$ 16s+13$ \\
$21s+ 5$&$ 16s+ 3 $&$21s+12$&$ 16s+ 8 $&$21s+19$&$ 16s+14$ \\
$21s+ 6$&$ 16s+ 4 $&$21s+13$&$ 16s+ 9 $&$21s+20$&$ 16s+15$\\
\noalign{\hrule height0.8pt}
\end{tabular}
}
\end{center}
\end{table}

\begin{lem}[Lu, Li, Guo and Fu~\cite{LLGF}]
\label{lem:LLGF}
If there is a quaternary Hermitian LCD $[n,3,d]$ code, then
there is a quaternary Hermitian LCD $[21s+n,3,16s+d]$ code
for every positive integer $s$.
\end{lem}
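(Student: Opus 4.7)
The plan is to glue $s$ copies of the quaternary simplex $[21,3,16]$ code onto the given Hermitian LCD $[n,3,d]$ code, using Lemma~\ref{lem:S-2-4}. Recall from Section~\ref{Sec:main} that for $k=3$ the matrix $S_{3}$ is a generator matrix of the quaternary simplex $[21,3,16]$ code, that this code is a constant weight code (every nonzero codeword has weight $16$), and that it is Hermitian self-orthogonal, i.e., $S_{3}\overline{S_{3}}^T=O$.

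Let $G$ be a generator matrix of a quaternary Hermitian LCD $[n,3,d]$ code $C$. Form the $3\times(n+21s)$ matrix
\[
G'=\bigl(\,G \mid S_{3}^{(s)}\,\bigr),
\]
and let $C'$ be the code it generates. First I would verify that the code $C_{s}$ with generator matrix $S_{3}^{(s)}$ is itself Hermitian self-orthogonal: since $S_{3}^{(s)}\overline{S_{3}^{(s)}}^{T}=\sum_{i=1}^{s}S_{3}\overline{S_{3}}^{T}=O$, this holds. Also $C_{s}$ has constant weight $16s$ on its nonzero codewords, because a nonzero $x\in\FF_{4}^{3}$ gives $xS_{3}^{(s)}=(xS_{3},\ldots,xS_{3})$, a vector of weight $s\cdot\wt(xS_{3})=16s$.

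Next, Lemma~\ref{lem:S-2-4} applied with $G_{1}=G$ and $G_{2}=S_{3}^{(s)}$ tells us that $C'$ is a Hermitian LCD code of length $n+21s$, dimension $3$, and minimum weight at least $d+16s$. To pin down the minimum weight exactly, note that any nonzero codeword $xG'$ satisfies
\[
\wt(xG')=\wt(xG)+\wt(xS_{3}^{(s)})=\wt(xG)+16s\ge d+16s,
\]
with equality achieved by choosing $x$ so that $xG$ is a minimum weight codeword of $C$. Therefore $C'$ is a quaternary Hermitian LCD $[21s+n,3,16s+d]$ code.

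There is no real obstacle here; the only thing to be careful about is the self-orthogonality of $S_{3}^{(s)}$ (which is immediate from $S_{3}\overline{S_{3}}^{T}=O$) and the observation that the constant-weight property of the simplex code forces $\wt(xG')-\wt(xG)=16s$ for every nonzero $x$, which is what turns the lower bound $d+16s$ from Lemma~\ref{lem:S-2-4} into equality.
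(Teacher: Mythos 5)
Your proof is correct: appending $s$ copies of $S_{3}$ and invoking Lemma~\ref{lem:S-2-4} together with the constant-weight, Hermitian self-orthogonal nature of the simplex $[21,3,16]$ code gives exactly the claimed $[21s+n,3,16s+d]$ Hermitian LCD code, including the equality of the minimum weight that the bracket notation requires. The paper itself offers no proof (it only cites~\cite{LLGF}), but your argument is precisely the intended one, built from the same ingredients ($S_{k}$, Lemma~\ref{lem:S-2-4}) that the paper sets up in Section~\ref{Sec:main}.
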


By the Griesmer bound~\eqref{eq:Gb} and \cite[Table 4]{LLGF}, we have 
\[
d_4(21s+5,3) = 16s+2 \text{ or } 16s+3. 
\]
The code $C_{26}$ given in Section~\ref{Sec:35} is a quaternary 
Hermitian LCD $[26,3,19]$ code.
By Lemma~\ref{lem:LLGF}, 
there is a  quaternary 
Hermitian LCD  $[21s'+26,3,16s'+19]$ code for a positive integer $s'$.  
Therefore, we have the following:

\begin{prop}\label{prop:1}
For a positive integer $s$, $d_4(21s+5,3) = 16s+3$.
\end{prop}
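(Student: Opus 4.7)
The plan is direct: match upper and lower bounds of $16s+3$ on $d_4(21s+5,3)$ using tools already assembled in the paper.

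For the upper bound, I would read off $\alpha_4(21s+5,3) = 16s+3$ from the row $n = 21s+5$ of Table~\ref{Tab:Gb} (which simply records the Griesmer bound~\eqref{eq:Gb}). Since every quaternary Hermitian LCD $[21s+5,3,d]$ code is in particular an unrestricted $[21s+5,3,d]$ code, this gives $d_4(21s+5,3) \le 16s+3$.

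For the lower bound, the strategy is to exhibit, for each $s \ge 1$, a quaternary Hermitian LCD $[21s+5,3,16s+3]$ code. The base case $s = 1$ is already in hand: the explicit code $C_{26}$ displayed in Section~\ref{Sec:35} is such a code (a Hermitian LCD $[26,3,19]$ code, with $26 = 21\cdot 1 + 5$ and $19 = 16\cdot 1 + 3$), and it was identified by inspecting the five inequivalent $[26,3,19]$ codes from~\cite{BGV}. For $s \ge 2$, I would apply Lemma~\ref{lem:LLGF} of~\cite{LLGF} to $C_{26}$: with $(n,d) = (26,19)$ and with the positive integer $s-1$ playing the role of the free parameter in that lemma, it produces a Hermitian LCD $[21(s-1)+26,\,3,\,16(s-1)+19] = [21s+5,\,3,\,16s+3]$ code.

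The entire proof is therefore a one-line corollary: combine the Griesmer upper bound, the existence of $C_{26}$ established in Section~\ref{Sec:35}, and the length-extension Lemma~\ref{lem:LLGF}. There is no substantive obstacle at this stage; the only genuine work has already been carried out, namely the computer-assisted verification in Section~\ref{Sec:35} that at least one of the five $[26,3,19]$ codes from~\cite{BGV} is Hermitian LCD (equivalently, has nonsingular $G\overline{G}^T$). Given that verification, the proposition follows immediately.
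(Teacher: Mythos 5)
Your proposal is correct and follows essentially the same route as the paper: the Griesmer bound (Table~\ref{Tab:Gb}) gives $d_4(21s+5,3)\le 16s+3$, and the lower bound comes from the Hermitian LCD $[26,3,19]$ code $C_{26}$ together with Lemma~\ref{lem:LLGF} applied with parameter $s-1$. Your explicit separation of the base case $s=1$ (where the shift parameter would be $0$) is a small point of added care over the paper's phrasing, but the substance is identical.
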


\begin{rem}\label{rem:Q}
An $[[n,k,d;c]]$ EAQECC $\cC$
encodes $k$ information qubits into $n$ channel qubits
with the help of $c$ pairs of maximally entangled Bell states.
The parameter $d$ is called the minimum weight of $\cC$.
The EAQECC $\cC$
can correct up to $\lfloor \frac{d-1}{2} \rfloor$ 
errors acting on the $n$ channel qubits (see e.g.~\cite{LLG}
and \cite{LLGF}).
An $[[n,k,d;0]]$ EAQECC is a standard quantum code.
An $[[n,k,d;n-k]]$ EAQECC is called {\em maximal entanglement}.
If there is a quaternary Hermitian LCD $[n,k,d]$ code, 
then there is a maximal entanglement
$[[n,k,d;n-k]]$ EAQECC
(see e.g.~\cite{LLG} and \cite{LLGF}).
From Proposition~\ref{prop:1}, 
there is a maximal entanglement 
$[[21s+5,3,16s+3;21s+2]]$ EAQECC for a positive integer $s$.  
It was shown in~\cite[Theorem 7]{LBW} that
$d \le \frac{3n \times 4^k}{4(4^k-1)}$
for an $[[n,k,d;c]]$ EAQECC.
Hence, 
a maximal entanglement $[[21s+5,3,16s+3;21s+2]]$ 
EAQECC meets the above bound
for a positive integer $s$.  
Therefore, the largest minimum weight among all 
maximal entanglement $[[21s+5,3,d;21s+2]]$ EAQECC's is $16s+3$
for a positive integer $s$.  
\end{rem}

From~\cite[Tables 3 and 4]{LLGF} 
and Proposition~\ref{prop:1},
we have 
\[
\begin{array}{ll}
d_4(21s+ 2,3)=16s,
&d_4(21s+ 3,3)=16s+ 1,\\
d_4(21s+ 4,3)=16s+ 2,
&d_4(21s+5,3) = 16s+3,
\end{array}
\]
for a positive integer $s$, and
\[
\begin{array}{ll}
d_4(21s+ 7,3)=16s+ 4,
&d_4(21s+ 8,3)=16s+ 5,\\
d_4(21s+ 9,3)=16s+ 6,
&d_4(21s+12,3)=16s+ 8,\\
d_4(21s+13,3)=16s+ 9,
&d_4(21s+17,3)=16s+12,\\
d_4(21s+18,3)=16s+13,
\end{array}
\]
for a nonnegative integer $s$.
In the remainder of this section, we consider the remaining cases.

As a special case of Theorem~\ref{thm:S-2-9} (ii), we have the following:

\begin{lem}\label{lem:S3}
Suppose that 
\begin{equation*} 
(n_0,d_0) \in 
\left\{\begin{array}{l}
(16,12),
(20,15),
(32,24),
(36,27),
(40,30),
\\
(48,36),
(52,39),
(56,42),
(64,48)
\end{array}\right\}.
\end{equation*}
If there is no  quaternary Hermitian
LCD $[n_0,3,d_0]$ code $C$ with $d(C^{\perp_H}) \ge 2$.
Then
there is no  quaternary Hermitian
LCD $[21s+n_0,3,16s+d_0]$ code $D$ with $d(D^{\perp_H}) \ge 2$
for  a positive integer $s$.
\end{lem}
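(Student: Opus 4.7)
The plan is to derive Lemma~\ref{lem:S3} as a direct specialization of Theorem~\ref{thm:S-2-9}(ii) at $k = 3$. The first step is to observe that every pair $(n_0, d_0)$ in the list has the form $(4r, 3r)$ with $r \in \{4, 5, 8, 9, 10, 12, 13, 14, 16\}$, so that
\[
r_4(n_0, 3, d_0) = 16 n_0 - 21 d_0 = 64 r - 63 r = r.
\]
Because the coefficients satisfy $16 \cdot 21 = 21 \cdot 16$, the quantity $r_4(n, 3, \alpha)$ is invariant under the simultaneous shift $(n, \alpha) \mapsto (n + 21, \alpha + 16)$, so in particular $r_4(21s + n_0, 3, 16s + d_0) = r$ for every integer $s \ge 1$.

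Next I would verify the two hypotheses of Theorem~\ref{thm:S-2-9}(ii) for $(n, \alpha) = (21s + n_0, 16s + d_0)$. The condition $4\alpha - 3n \ge 1$ becomes $s + (4 d_0 - 3 n_0) = s \ge 1$, which holds by assumption on $s$. The condition $4 r_4(n, 3, \alpha) \ge k = 3$ becomes $4 r \ge 3$, which is clear since $r \ge 4$ in every case.

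With these two checks in place, Theorem~\ref{thm:S-2-9}(ii) asserts that if there is no quaternary Hermitian LCD $[4 r_4(n,3,\alpha),\, 3,\, 3 r_4(n,3,\alpha)]$ code $C_0$ with $d(C_0^{\perp_H}) \ge 2$, then there is no quaternary Hermitian LCD $[n, 3, \alpha]$ code $D$ with $d(D^{\perp_H}) \ge 2$. Since $4 r_4(n, 3, \alpha) = 4r = n_0$ and $3 r_4(n, 3, \alpha) = 3r = d_0$, this is exactly the statement of the lemma.

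There is no substantive obstacle: the content of the lemma is simply the recognition that each listed pair $(n_0, d_0)$ lies on the boundary locus $4 d_0 = 3 n_0$, where the shift $(n, \alpha) \mapsto (n + 21, \alpha + 16)$ preserves $r_4$, so Theorem~\ref{thm:S-2-9}(ii) applies uniformly in $s$ with the same auxiliary parameters $(n_0, d_0)$. The only thing one must be careful about when writing the proof is to record these three identities ($4 d_0 - 3 n_0 = 0$, $4 r_4 = n_0$, and $3 r_4 = d_0$) before invoking Theorem~\ref{thm:S-2-9}(ii).
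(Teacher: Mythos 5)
Your proof is correct and is exactly the route the paper intends: the paper introduces Lemma~\ref{lem:S3} with the phrase ``As a special case of Theorem~\ref{thm:S-2-9}'' and omits the verification, which you have supplied. The three identities you record ($4d_0-3n_0=0$, hence $r_4(21s+n_0,3,16s+d_0)=16n_0-21d_0=r$ with $4r=n_0$ and $3r=d_0$) are precisely the bookkeeping needed to see that Theorem~\ref{thm:S-2-9}(ii) applies uniformly in $s$.
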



\begin{prop}\label{prop:21s}
For a positive integer $s$, $d_4(21s,3)=16s-1$.
\end{prop}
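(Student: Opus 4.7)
The plan is to prove the equality $d_4(21s,3)=16s-1$ by establishing matching upper and lower bounds, both of which follow almost immediately from results already in the paper.

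For the upper bound, I would invoke the Griesmer-bound value $\alpha_4(21s,3)=16s$ recorded in Table~\ref{Tab:Gb}, which gives $d_4(21s,3)\le 16s$. The key observation is that $21=\frac{4^3-1}{3}$, so $n=21s$ is exactly the congruence class handled by Theorem~\ref{thm:S1} with $k=3$. That theorem rules out the existence of any quaternary Hermitian LCD $[21s,3,16s]$ code, and therefore sharpens the bound to $d_4(21s,3)\le 16s-1$.

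For the lower bound, I would use the entry $d_4(21,3)=15$ from Table~\ref{Tab:35}, which exhibits a concrete quaternary Hermitian LCD $[21,3,15]$ code. Applying Lemma~\ref{lem:LLGF} to this seed code with parameter $s$ replaced by $s-1$ (valid for $s\ge 2$) produces a quaternary Hermitian LCD $[21(s-1)+21,3,16(s-1)+15] = [21s,3,16s-1]$ code, while the case $s=1$ is handled directly by Table~\ref{Tab:35}. This gives $d_4(21s,3)\ge 16s-1$ for every positive integer $s$.

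Combining the two bounds yields $d_4(21s,3)=16s-1$, completing the proof. There is no substantive obstacle here: the proposition is essentially a packaging of Theorem~\ref{thm:S1} (nonexistence at the Griesmer value), the length-$21$ entry of Table~\ref{Tab:35} (concrete construction), and the lengthening procedure of Lemma~\ref{lem:LLGF}. The mild care needed is merely to split the existence argument into the base case $s=1$ and the lengthened cases $s\ge 2$.
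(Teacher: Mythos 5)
Your proposal is correct and follows essentially the same route as the paper: the upper bound comes from Theorem~\ref{thm:S1} applied with $k=3$ and $n=21s=\frac{4^3-1}{3}s$, and the lower bound comes from lengthening the Hermitian LCD $[21,3,15]$ code via Lemma~\ref{lem:LLGF}. Your explicit handling of the base case $s=1$ versus the lengthened cases $s\ge 2$ is a minor bookkeeping point that the paper leaves implicit.
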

\begin{proof}
By Theorem~\ref{thm:S1},  $d_4(21s,3) \le 16s-1$.
There is a  quaternary 
Hermitian LCD $[21,3,15]$ code~\cite[Table 3]{LLGF}.
By Lemma~\ref{lem:LLGF}, 
there is a  quaternary  Hermitian LCD $[21s,3,16s-1]$ code.
\end{proof}

The results in Table~\ref{Tab:35} are used in the following proposition.

\begin{prop}\label{prop:dim3-1}
For a nonnegative integer $s$,
\[
\begin{array}{ll}
d_4(21s+11,3)= 16s+7, & d_4(21s+16,3)= 16s+11, \\
d_4(21s+20,3)= 16s+14. & \\
\end{array}
\]
\end{prop}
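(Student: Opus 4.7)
The plan is to prove each of the three equalities by a matched upper/lower bound argument. From Table~\ref{Tab:Gb} the Griesmer bound gives $\alpha_4(21s+11,3)=16s+8$, $\alpha_4(21s+16,3)=16s+12$, and $\alpha_4(21s+20,3)=16s+15$, which is one more than the value we want in each case. So the task splits into constructing Hermitian LCD codes one below the Griesmer bound and excluding any Hermitian LCD code at the Griesmer bound.

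For the lower bounds, I would read off from Table~\ref{Tab:35} the base values $d_4(11,3)=7$, $d_4(16,3)=11$ and $d_4(20,3)=14$, and then invoke Lemma~\ref{lem:LLGF} to lift them to Hermitian LCD $[21s+11,3,16s+7]$, $[21s+16,3,16s+11]$ and $[21s+20,3,16s+14]$ codes for every positive integer $s$; the $s=0$ cases already live in Table~\ref{Tab:35}.

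For the upper bounds, I would first use Lemma~\ref{lem:dd1} to force $d(C^{\perp_H}) \ge 2$ on any hypothetical Hermitian LCD code attaining the Griesmer bound. For instance, a Hermitian LCD $[21s+11,3,16s+8]$ code $C$ satisfies $d_4(21s+10,3) \le \alpha_4(21s+10,3) = 16s+7 = (16s+8)-1$, so $d(C^{\perp_H}) \ge 2$; the checks for the other two series are identical, using $\alpha_4(21s+15,3)=16s+11$ and $\alpha_4(21s+19,3)=16s+14$. Then I would apply Lemma~\ref{lem:S3} with $(n_0,d_0)$ equal to $(32,24)$, $(16,12)$ and $(20,15)$ respectively. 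Since Table~\ref{Tab:35} records $d_4(32,3)=23$, $d_4(16,3)=11$, $d_4(20,3)=14$, each strictly below the corresponding $d_0$, no Hermitian LCD $[n_0,3,d_0]$ code exists at all, so the hypothesis of Lemma~\ref{lem:S3} is vacuously satisfied; this yields the desired contradiction in every case where the lemma applies.

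The only piece of bookkeeping is that for $n=21s+11$ one writes $21s+11=21(s-1)+32$, so Lemma~\ref{lem:S3} (which requires a positive integer argument) only handles $s \ge 2$; the cases $s=0$ and $s=1$ must be picked off separately from Table~\ref{Tab:35} via $d_4(11,3)=7$ and $d_4(32,3)=23$. For $n=21s+16$ and $n=21s+20$ the reduction of Lemma~\ref{lem:S3} is already based at $s=1$, so only $s=0$ needs to be read from Table~\ref{Tab:35}. I do not expect any genuine obstacle here; the whole argument is a direct application of the previously proved machinery to the tabulated data.
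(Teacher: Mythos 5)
Your proposal is correct and follows essentially the same route as the paper: matched Griesmer upper bounds excluded via Lemma~\ref{lem:dd1} (to force $d(C^{\perp_H})\ge 2$) and Lemma~\ref{lem:S3} with the base nonexistence data $(16,12)$, $(20,15)$, $(32,24)$ from Table~\ref{Tab:35}, together with lower bounds lifted by Lemma~\ref{lem:LLGF}. Your explicit bookkeeping of the small cases ($s=0$ for all three series and additionally $s=1$ for $n=21s+11$ via the $[11,3,8]$ and $[32,3,24]$ nonexistence) matches what the paper does implicitly.
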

\begin{proof}
Suppose that $(n_0,d_0) \in \{(16,12),(20,15),(32,24)\}$
and $s$ is a nonnegative integer.
From Table~\ref{Tab:35}, there is no  quaternary  Hermitian
LCD $[n_0,3,d_0]$ code.
Hence, by Lemma~\ref{lem:S3}, 
there is no  quaternary  Hermitian
LCD $[21s+n_0,3,16s+d_0]$ code $C$ with $d(C^{\perp_H}) \ge 2$.
Suppose that  there is  a  quaternary  Hermitian LCD $[n,3,d]$ code $D$ with 
$d(D^{\perp_H})=1$.
By Lemma~\ref{lem:dd1},
a quaternary 
Hermitian LCD $[n-1,3,d]$ code is constructed.
This contradicts the 
Griesmer bound (see Table~\ref{Tab:Gb}).

It was shown in~\cite[Tables~3 and 4]{LLGF} that
\[
\begin{array}{ll}
d_4(21s+11,3)\ge 16s+7,&
d_4(21s+16,3)\ge 16s+11, \\
d_4(21s+20,3)\ge 16s+14.
\end{array}
\]
From Table~\ref{Tab:35},
it is known that there is no  quaternary 
Hermitian LCD $[11,3,8]$ code.
This completes the proof.
\end{proof}

The results in Proposition~\ref{prop:36-64} 
are used in the following proposition.

\begin{prop}\label{prop:dim3-2}
For a nonnegative integer $s$,
\[
\begin{array}{ll}
d_4(21s+1,3)= 16s-1,&
d_4(21s+ 6,3)= 16s+3,\\
d_4(21s+10,3)= 16s+6,&
d_4(21s+14,3)= 16s+9,\\
d_4(21s+15,3)= 16s+10,&
d_4(21s+19,3)= 16s+13.
\end{array}
\]
\end{prop}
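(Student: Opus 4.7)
My plan is to follow the template of the proof of Proposition~\ref{prop:dim3-1}. For each residue $t\in\{1,6,10,14,15,19\}$ I would establish the lower bound $d_4(21s+t,3)\ge\alpha_4(21s+t,3)-1$ by bootstrapping a short code from Table~\ref{Tab:35} via Lemma~\ref{lem:LLGF}, and the matching upper bound by ruling out any quaternary Hermitian LCD $[21s+t,3,d^\ast]$ code with $d^\ast=\alpha_4(21s+t,3)$. The upper-bound argument combines Lemmas~\ref{lem:dd1} and~\ref{lem:S3} with the non-existence results of Proposition~\ref{prop:36-64}, exactly as in the proof of Proposition~\ref{prop:dim3-1}.

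The upper-bound argument has two steps. First, assume $C$ is a Hermitian LCD $[21s+t,3,d^\ast]$ code; I would verify that $d_4(21s+t-1,3)\le d^\ast-1$, so that Lemma~\ref{lem:dd1} forces $d(C^{\perp_H})\ge 2$. For $t\in\{6,10,14,15,19\}$, Table~\ref{Tab:Gb} shows that $\alpha_4(\cdot,3)$ drops by one when the length decreases from $21s+t$ to $21s+t-1$, so the Griesmer bound suffices. The exceptional case is $t=1$: here $\alpha_4(21s,3)=\alpha_4(21s+1,3)=16s$, and I would instead invoke Proposition~\ref{prop:21s} (a consequence of Theorem~\ref{thm:S1}) to obtain $d_4(21s,3)=16s-1=d^\ast-1$. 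Second, I pick the unique $n_0\in\{16,20,32,36,40,48,52,56,64\}$ with $n_0\equiv t\pmod{21}$, namely
\[
(t,n_0,d_0)\in\{(1,64,48),(6,48,36),(10,52,39),(14,56,42),(15,36,27),(19,40,30)\},
\]
so that Lemma~\ref{lem:S3} reduces the nonexistence of our LCD code (with $d(C^{\perp_H})\ge 2$) to that of a Hermitian LCD $[n_0,3,d_0]$ code, furnished by Proposition~\ref{prop:36-64}. This handles $s\ge 4$ for $t=1$, $s\ge 3$ for $t\in\{6,10,14\}$, and $s\ge 2$ for $t\in\{15,19\}$.

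The finitely many remaining cases are dispatched directly: the small $s$ for which $21s+t\le 35$ are read off from Table~\ref{Tab:35}, while the lengths $n\in\{36,40,43,48,52,56,64\}$ are covered by the explicit classifications of Proposition~\ref{prop:36-64}. For the lower bound, Lemma~\ref{lem:LLGF} applied to the codes $[6,3,3]$, $[10,3,6]$, $[14,3,9]$, $[15,3,10]$, and $[19,3,13]$ from Table~\ref{Tab:35} produces Hermitian LCD $[21s+t,3,16s+d]$ codes for $t\in\{6,10,14,15,19\}$ and all $s\ge 1$, with the $s=0$ base cases being the Table entries themselves. For $t=1$, applying Lemma~\ref{lem:LLGF} to the $[22,3,15]$ code from Table~\ref{Tab:35} yields the lower bound for $s\ge 2$, and $s=1$ is again a Table entry. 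The only real wrinkle is the $t=1$ case, where the Griesmer bound alone does not rule out minimum weight $16s$ at length $21s$ after shortening and the sharper input of Theorem~\ref{thm:S1} is needed; everything else is routine bookkeeping across the six residue classes.
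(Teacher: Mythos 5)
Your proposal is correct and follows essentially the same route as the paper: Lemma~\ref{lem:dd1} together with the Griesmer bound (and Proposition~\ref{prop:21s} for the residue $t=1$) forces $d(C^{\perp_H})\ge 2$, Lemma~\ref{lem:S3} with the nonexistence results of Proposition~\ref{prop:36-64} then kills the generic case, and the finitely many short lengths are read off from Table~\ref{Tab:35} and Proposition~\ref{prop:36-64}, with the lower bounds supplied by Lemma~\ref{lem:LLGF}. Your choice of $(n_0,d_0)$ pairs and the bookkeeping of which $s$ each step covers match the paper's argument.
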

\begin{proof}
Suppose that $s$ is a nonnegative integer and
\[
(n_0,d_0) \in \{
(36,27),
(40,30),
(48,36),
(52,39),
(56,42),
(64,48)
\}.
\]
By Proposition~\ref{prop:36-64} (i), (iii)--(v), 
there is no  quaternary  Hermitian
LCD $[n_0,3,d_0]$ code.
Hence, by Lemma~\ref{lem:S3}, 
there is no  quaternary  Hermitian
LCD $[21s+n_0,3,16s+d_0]$ code $C$ with $d(C^{\perp_H}) \ge 2$.
Now suppose that 
there is a  quaternary  Hermitian 
LCD $[21s+n_0,3,16s+d_0]$ code $D$ with $d(D^{\perp_H}) =1$.
By Lemma~\ref{lem:dd1},
a  quaternary 
Hermitian LCD $[21s+n_0-1,3,16s+d_0-1]$ 
code is constructed.
This contradicts 
$d_4(21s,3)=16s-1$ in Proposition~\ref{prop:21s}
if $21s+n_0 =21s+1$ $(s \ge 1)$,
and this contradicts 
the Griesmer bound (see Table~\ref{Tab:Gb}) otherwise.

It was shown in~\cite[Tables 3 and 4]{LLGF} that
\[
\begin{array}{ll}
d_4(21s+ 1,3) \ge 16s-1, &
d_4(21s+ 6,3)\ge 16s+3, \\
d_4(21s+10,3) \ge 16s+6, &
d_4(21s+14,3) \ge 16s+9, \\
d_4(21s+15,3)\ge 16s+10,&
d_4(21s+19,3)\ge 16s+13.
\end{array}
\]
From Table~\ref{Tab:35} and Proposition~\ref{prop:36-64} (ii),
it is known that there is no  quaternary 
Hermitian LCD $[n,3,d]$ code for 
\begin{multline*}
(n,d)=
( 6, 4),
(10, 7),
(14,10),
(15,11),
\\
(19,14),
(27,20),
(22,16),
(31,23),
(35,26),
(43,32).
\end{multline*}
This completes the proof.
\end{proof}

Combining Propositions~\ref{prop:1}, \ref{prop:21s},
\ref{prop:dim3-1} and \ref{prop:dim3-2} with Remark~\ref{rem},
we determine $d_4(n,3)$ as described in Table~\ref{Tab:B}
and we complete the proof of Theorem~\ref{thm:main}.

\begin{table}[thb]
\caption{$d_4(n,3)$ $(n \ge 6)$}
\label{Tab:B}
\begin{center}
{\small
\begin{tabular}{c|c|c|c|c|c}
\noalign{\hrule height0.8pt}
$n$ & $d_4(n,3)$& Reference &
$n$ & $d_4(n,3)$& Reference \\
\hline
$21s   $& $16s-1   $ & Proposition~\ref{prop:21s} &
  $21s+11$& $16s+ 7$ & Proposition~\ref{prop:dim3-1} \\
$21s+ 1$& $16s-1  $ & Proposition~\ref{prop:dim3-2} &
  $21s+12$& $16s+ 8$ &\cite[Table 4]{LLGF}\\
$21s+ 2$& $16s   $ &\cite[Table 4]{LLGF} &
  $21s+13$& $16s+ 9$ &\cite[Table 4]{LLGF}\\
$21s+ 3$& $16s+ 1$ &\cite[Table 4]{LLGF} &
  $21s+14$& $16s+9 $ & Proposition~\ref{prop:dim3-2} \\
$21s+ 4$& $16s+ 2$ &\cite[Table 4]{LLGF} &
  $21s+15$& $16s+10$ & Proposition~\ref{prop:dim3-2} \\
$21s+ 5$& $16s+ 3$ & Proposition~\ref{prop:1} &
  $21s+16$& $16s+11$ & Proposition~\ref{prop:dim3-1}\\
$21s+ 6$& $16s+ 3$ & Proposition~\ref{prop:dim3-2} &
  $21s+17$& $16s+12$ &\cite[Table 4]{LLGF}\\
$21s+ 7$& $16s+ 4$ &\cite[Table 4]{LLGF} &
  $21s+18$& $16s+13$ &\cite[Table 4]{LLGF}\\
$21s+ 8$& $16s+ 5$ &\cite[Table 4]{LLGF} &
  $21s+19$& $16s+13$ &Proposition~\ref{prop:dim3-2} \\
$21s+ 9$& $16s+ 6$ &\cite[Table 4]{LLGF} &
  $21s+20$& $16s+14$ &Proposition~\ref{prop:dim3-1}\\
$21s+10$& $16s+6 $ & Proposition~\ref{prop:dim3-2} &&\\
\noalign{\hrule height0.8pt}
\end{tabular}
}
\end{center}
\end{table}


\bigskip
\noindent
{\bf Acknowledgment.}
This work was supported by JSPS KAKENHI Grant Number 15H03633.
The authors would like to thank the anonymous referees for 
the useful comments.



\section*{Appendix}
In Appendix, we give a proof of Proposition~\ref{prop:dim2-1}.
For $a=(a_1,a_2,\ldots,a_{5})\in \ZZ_{\ge 0}^{5}$,
we define a quaternary $[n,2]$ code $C(a)$ having generator matrix of the form
$
G(a)=
\left(
\begin{array}{ccccc}
I_2 &  M(a)  \\
\end{array}
\right),
$
where 
\begin{equation}\label{eq:G}
M(a)
=
\left(
\begin{array}{cccccc}
\0_{a_{1}}&\1_{a_{2}}&\1_{a_{3}}&\1_{a_{4}}&\1_{a_{5}}\\
\1_{a_{1}}&\0_{a_{2}}&\1_{a_{3}}&\omega\1_{a_{4}}&\omega^2\1_{a_{5}}\\
\end{array}
\right).
\end{equation}
It is trivial that any quaternary $[n,2]$ code $C$ is equivalent to
some $C(a)$ if $d(C^{\perp_H}) \ge 2$.
By considering all codewords, 
the weight enumerator of the code $C(a)$ is written using
$a_1,a_2,\ldots,a_{5}$ as follows:
\begin{equation*} 
\begin{split}
&
1
+3y^{1+a_1+a_3+a_4+a_5}
+3y^{1+a_2+a_3+a_4+a_5}
\\&
+3y^{2+a_1+a_2+a_4+a_5}
+3y^{2+a_1+a_2+a_3+a_4}
+3y^{2+a_1+a_2+a_3+a_5}.
\end{split}
\end{equation*}
The matrix  $G(a)\overline{G(a)}^T$ is written using
$a_1,a_2,\ldots,a_{5}$ as follows:
\[
\left(
\begin{array}{cc}
1+a_2+a_3+a_4+a_5 & a_3+\omega a_4+\omega^2 a_5 \\
a_3+\omega^2 a_4+\omega a_5 & 1+a_1+a_3+a_4+a_5 
\end{array}
\right).
\]
Hence, the determinant of  $G(a)\overline{G(a)}^T$ is written using
$a_1,a_2,\ldots,a_{5}$ as follows:
\begin{equation}\label{eq:det}
\begin{split}
&
1+a_1+a_2+a_1a_2+a_1a_3+a_1a_4+a_1a_5+a_2a_3+a_2a_4+a_2a_5
\\&
+(\omega+\omega^2)(a_3a_4+a_3a_5+a_4a_5).
\end{split}
\end{equation}

\begin{lem}\label{lem:1}
Suppose that $n \equiv 0,4 \pmod 5$.
If there is a quaternary 
Hermitian LCD $[n,2,\lfloor \frac{4n}{5} \rfloor]$ code $C$, then $d(C^{\perp_H}) \ge 2$.
\end{lem}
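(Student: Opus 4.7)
The plan is to argue by invoking Lemma~\ref{lem:dd1} directly: one just needs to establish the hypothesis $d_4(n-1,2) \le \lfloor \tfrac{4n}{5}\rfloor - 1$ in each residue class. The Griesmer bound in the form~\eqref{eq:d} gives the uniform upper estimate $d_4(n-1,2) \le \lfloor \tfrac{4(n-1)}{5}\rfloor$, so the entire proof reduces to the elementary floor identity
\[
\left\lfloor \frac{4(n-1)}{5}\right\rfloor \;=\; \left\lfloor \frac{4n}{5}\right\rfloor - 1
\qquad \text{for } n \equiv 0,4 \pmod 5.
\]

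First, I would handle the two cases by writing $n$ explicitly. If $n = 5m$, then $\lfloor 4n/5\rfloor = 4m$ and $\lfloor 4(n-1)/5 \rfloor = \lfloor (20m-4)/5\rfloor = 4m-1$. If $n = 5m+4$, then $\lfloor 4n/5\rfloor = 4m+3$ and $\lfloor 4(n-1)/5\rfloor = \lfloor (20m+12)/5\rfloor = 4m+2$. In both situations the identity above holds, hence the Griesmer bound applied to length $n-1$ forces $d_4(n-1,2) \le \lfloor \tfrac{4n}{5}\rfloor - 1$.

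With that inequality in hand, Lemma~\ref{lem:dd1} applied to the given quaternary Hermitian LCD $[n, 2, \lfloor 4n/5\rfloor]$ code $C$ concludes that $d(C^{\perp_H}) \ge 2$, which is exactly the statement of Lemma~\ref{lem:1}. There is no real obstacle here; the lemma is essentially a bookkeeping step that sets up the next stage of the argument (where one may then safely assume $C$ is equivalent to some $C(a)$ as described around equation~\eqref{eq:G} and exploit the determinant formula~\eqref{eq:det}).
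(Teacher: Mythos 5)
Your proposal is correct and follows essentially the same route as the paper: the paper also assumes $d(C^{\perp_H})=1$, deletes the zero column via the argument of Lemma~\ref{lem:dd1}, and derives a contradiction with the Griesmer bound~\eqref{eq:d} at length $n-1$, which implicitly rests on exactly the floor identity $\lfloor 4(n-1)/5\rfloor=\lfloor 4n/5\rfloor-1$ for $n\equiv 0,4\pmod 5$ that you verify explicitly. Your version simply makes that computation explicit and invokes Lemma~\ref{lem:dd1} in its stated form rather than rerunning its proof.
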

\begin{proof}
Suppose that $n \equiv 0,4 \pmod 5$.
Suppose that there is a quaternary 
Hermitian LCD $[n,2,\lfloor \frac{4n}{5} \rfloor]$ code $C$ with 
$d(C^{\perp_H})=1$.
By Lemma~\ref{lem:dd1},
a quaternary Hermitian LCD 
$[n-1,2,\lfloor \frac{4n}{5} \rfloor]$ code is constructed.
This contradicts~\eqref{eq:d}.
\end{proof}

Suppose that $n \equiv 0,4 \pmod 5$.
Let $C$ be a quaternary Hermitian LCD $[n,2,\lfloor \frac{4n}{5} \rfloor]$ code.
By Lemma~\ref{lem:1},
we may assume without loss of generality that
$C=C(a)$, that is,
$C$ has generator matrix of the following form 
$
G(a)=
\left(
\begin{array}{ccccc}
I_2 &  M(a)  \\
\end{array}
\right),
$
where $M(a)$ is listed in~\eqref{eq:G}.
From the length and the minimum weight of $C(a)$, 
$a_1,a_2,\ldots,a_5$ satisfy the following conditions:
\begin{align}
1+\sum_{i \in\{1,2,3,4,5\} \setminus \{j\}}a_i
   &\ge \left\lfloor \frac{4n}{5} \right\rfloor \ (j=1,2), \label{eq:C1}\\
2+\sum_{i \in\{1,2,3,4,5\} \setminus \{j\}}a_i
   &\ge \left\lfloor \frac{4n}{5} \right\rfloor\ (j=3,4,5), \label{eq:C2}\\
2+\sum_{i \in\{1,2,3,4,5\}} a_i&=n. \label{eq:C6}
\end{align}
From~\eqref{eq:C1}--\eqref{eq:C6}, we have
\begin{equation}\label{eq:C7}
\begin{split}
a_i+1 &\le n-\left\lfloor \frac{4n}{5} \right\rfloor \ (i=1,2), \\
a_i     &\le n-\left\lfloor \frac{4n}{5} \right\rfloor \ (i=3,4,5).
\end{split}
\end{equation}

\begin{itemize}
\item Suppose that $n=5s$.  
From~\eqref{eq:C7}, we have
\begin{align*}
a_i \le s-1\ (i=1,2) \text{ and }
a_i \le s\ (i=3,4,5).
\end{align*}
Then we have
\[
n=2+a_1+a_2+a_3+a_4+a_5 \le 5s=n.
\]
Hence, we have
\begin{align*}
a_1=a_2= s-1  \text{ and }
a_3=a_4=a_5= s.
\end{align*}

By~\eqref{eq:det}, using $s$,
the determinant of  $G(a)\overline{G(a)}^T$ is written as
$10s^2-6s$.
Hence, $C(a)$ is not Hermitian LCD for every positive integer $s$.

\item Suppose that $n=5s+4$.
From~\eqref{eq:C7}, we have
\begin{align*}
a_i \le s \ (i=1,2) \text{ and }
a_i \le s+1 \ (i=3,4,5).
\end{align*}
Then we have
\[
n=2+a_1+a_2+a_3+a_4+a_5 \le 5s+5 =n+1.
\]
Hence, we have
\begin{align*}
|\{i \in \{1,2\} \mid a_i\le s-2\}|
=|\{i \in \{3,4,5\} \mid a_i \le s-1\}| &= 0,\\
|\{i \in \{1,2\} \mid a_i=s-1\}|+|\{i \in \{3,4,5\} \mid a_i=s\}| &= 1. 
\end{align*}
This yields that there are the five possibilities for
$a=(a_1,a_2,a_3,a_4,a_5)$, 
where the results are listed in Table~\ref{Tab:dim2-1}.
The determinant $\det$ of  $G(a)\overline{G(a)}^T$ is also
listed in Table~\ref{Tab:dim2-1}.
Therefore, $C_i$ $(i=1,2,\ldots,5)$
is not Hermitian LCD for every positive integer $s$.
\end{itemize}
This completes the proof of Proposition~\ref{prop:dim2-1}.

\begin{table}[thb]
\caption{Case $n=5s+4$}
\label{Tab:dim2-1}
\begin{center}
{\small
\begin{tabular}{c|c|l}
\noalign{\hrule height0.8pt}
$C(a)$ & $a=(a_1,a_2,a_3,a_4,a_5)$ & \multicolumn{1}{c}{$\det$} \\
\hline
$C_1$&
$(s- 1,s,s + 1,s + 1,s + 1)$  &$10s^2 + 10s$\\
$C_2$&
$(s,s - 1,s + 1,s + 1,s + 1)$ &$10s^2 + 10s$\\
$C_3$&
$(s,s,s,s + 1,s + 1)$ &$10s^2 + 10s + 2$\\
$C_4$&
$(s,s,s + 1,s,s + 1)$ &$10s^2 + 10s + 2$\\
$C_5$&
$(s,s,s + 1,s + 1,s)$ &$10s^2 + 10s + 2$\\
\noalign{\hrule height0.8pt}
\end{tabular}
}
\end{center}
\end{table}

\end{document}